\renewcommand\subparagraph{%
	\@startsection {subparagraph}{5}{\z@ }{3.25ex \@plus 1ex
		\@minus .2ex}{-1em}{\normalfont \normalsize \itshape }}%
\providecommand{\corollaryname}{Corollary}
\providecommand{\definitionname}{Definition}
\providecommand{\lemmaname}{Lemma}
\providecommand{\remarkname}{Remark}
\providecommand{\theoremname}{Theorem}
\providecommand{\propositionname}{Proposition}
\providecommand{\examplename}{Example}
\providecommand{\settingname}{Setting}
\theoremstyle{plain}
\newtheorem{thm}{\protect\theoremname}[section]
\newtheorem{cor}[thm]{\protect\corollaryname}
\newtheorem{lem}[thm]{\protect\lemmaname}
\newtheorem{prop}[thm]{\propositionname}
\theoremstyle{definition}
\newtheorem{defi}[thm]{\protect\definitionname}
\newtheorem{expl}[thm]{\examplename}
\theoremstyle{remark}
\newtheorem{rem}[thm]{\protect\remarkname}
\numberwithin{equation}{section}
\newcommand{\outside}{c}%
\newcommand{\half}{\frac{1}{2}}
\newcommand{\vpos}{\vphantom{\os_{i}}}	%
\newcommand{\vpsum}{\vphantom{\sum}}	%
\newcommand{\ol}{\Omega}			%
\newcommand{\olt}{\Theta}
\newcommand{\olm}{\Theta}	
\newcommand{\os}{\omega}			%
\newcommand{\Rd}{\mathbb{R}^{d}} 	%
\newcommand{\RR}{\mathbb{R}} 		%
\newcommand{\cheese}{\Omega}		%
\newcommand{\bos}{\partial\os}		%
\newcommand{\bol}{\partial\ol}
\newcommand{\bolm}{\partial\olm}	
\newcommand{\bolt}{\partial\olt}		%
\newcommand{\bcheese}{\partial\cheese}	%
\newcommand{\supp}[1]{\mathrm{supp}(#1)}
\newcommand{\sli}[1]{S\left(#1\right)}				%
\newcommand{\measure}[1]{\;d\mu\negmedspace\left(#1\right)}%
\newcommand{\lt}[1]{L^{2}(#1)}					%
\newcommand{\ltd}[1]{L^{2}(#1;\Rd)}				%
\newcommand{\loct}[1]{L_{\mathrm{loc}}^{2}(#1)}		%
\newcommand{\sob}[1]{H^{1}(#1)}						%
\newcommand{\sobo}[1]{H^{1}_{0}(#1)}				%
\newcommand{\sobs}[2]{H^{#1}(#2)}				%
\newcommand{\sobdivo}[1]{H_{\mathrm{div},0}(#1)}		%
\newcommand{\sobloc}[2]{\mathring{H}^{#1}(#2)}	%
\newcommand{\sobloco}[2]{\mathring{H}^{#1}_0(#2)}	%
\newcommand{\harm}[1]{\mathscr{H}(#1)}				%
\newcommand{\harmo}[1]{\mathscr{H}(#1)}			%
\newcommand{\nulls}{\mathscr{N}}					%
\newcommand{\p}{\operatorname{\textit{P}}}			%
\newcommand{\s}{\operatorname{\textit{S}}}
\newcommand{\dlay}{K}						%
\newcommand{\tr}{tr}							%
\newcommand{\divv}[1]{\mathrm{\mathop{div}}(#1)}		%
\newcommand{\pd}[2]{\frac{\partial #1}{\partial #2}}		%
\newcommand{\funf}{\mathbf{f}}
\newcommand{\fung}{\mathbf{g}}
\newcommand{\vect}{\mathbf{v}}
\newcommand{\BB}{\mathbb{B}}%
\newcommand{\ot}{\leftarrow}
\title{\textbf{\large Unique reconstruction of simple magnetizations from their magnetic potential}}
\author[1]{L. Baratchart}
\author[2]{C. Gerhards}
\author[2]{A. Kegeles\thanks{alexander.kegeles@geophysik.tu-freiberg.de}}
\author[2]{P. Menzel}
\affil[1]{
INRIA, Project FACTAS, 2004 route des Lucioles, BP 93, \linebreak 
Sophia-Antipolis 06902 Cedex, France \linebreak
}
\affil[2]{
TU Bergakademie Freiberg, Institute of Geophysics and Geoinformatics, \linebreak Gustav-Zeuner-Str. 12, 09599 Freiberg, Germany
}
\date{\vspace{-5ex}}
\begin{document}
\maketitle

\begin{abstract}
  Inverse problems arising in (geo)magnetism are typically ill-posed, in particular {they exhibit non-uniqueness}. Nevertheless, there exist {nontrivial model spaces}
  on which the problem is uniquely solvable. {Our goal is
    here to describe}  such spaces that {accommodate} constraints
  {suited for} applications. In this paper we treat the inverse
  {magnetization problem on a Lipschitz domain with}  fairly general topology. We characterize the subspace of $L^{2}$-vector fields that causes non-uniqueness, and identify a subspace of harmonic gradients on which the inversion becomes unique. This classification {has consequences} for applications and we present some of them in {the context} of geo-sciences. In the second part of the paper, we discuss the space of piecewise constant vector fields. This vector space is too large to make the inversion unique. But as we show, it contains a dense subspace in $L^2$ on which the problem becomes uniquely solvable, i.e., magnetizations from this subspace are uniquely determined by their magnetic potential. 
\end{abstract}

\section{Introduction}%
The goal of magnetic inverse problems is to recover the magnetization of an object from the magnetic field it creates. This problem is ill-posed as soon as the class of admissible magnetizations is large enough \cite{backus96,blakely95,runcorn75}, i.e., if it contains different magnetizations that create the same magnetic field. A central issue is then to restrict the class of {models so that the problem} has a unique solution. 

In many situations of interest, {one} can constrain admissible magnetizations by {making} a priori assumptions on the system. 
For example, in large-scale lithospheric studies, one may confine the magnetization of the lithosphere to the Earth's surface (e.g., \cite{gubbins11,masterton13}). In estimations of the thickness of a magnetized layer, one often assumes the thickness to be constant (though unknown) and the magnetization to be self-similar (e.g., \cite{blakely95,boul09,maus97} and references therein). When studying rock samples with magnetic inclusions one might assume the magnetization to be constant within each inclusion \cite{degroot18}. 
In {these examples,} the additional information may or may not suffice to render the inversion well-posed. To {understand when it happens}, we need to {characterize classes of models over} which the inversion is possible.

In gravimetry, related uniqueness questions have been studied in great detail; see, e.g., \cite{michel05,michel08,isakov90,isaqian13,sanso12} to name only a few recent publications. Though similar in nature, the magnetic inverse problem is more challenging due to its vectorial setup, meaning that combinations of the vector components of magnetizations can lead to cancellations that cannot occur for scalar densities in gravimetry. As a result, the details of the magnetic inverse problem are less understood.

Recently, a new interest for uniqueness issues arose in applications to lithospheric studies \cite{gubbins11,verles18} and in paleo-magnetism \cite{degroot18,lima13,vervelidou16}. When the magnetization is confined to an infinitely thin surface, like a sphere, a thin plate, or a general Lipschitz surface, several (partial) uniqueness results have been discussed in \cite{bargerkeg20,baratchart13,gerhards16a,baratchartgerhards16,gerhards16b,maushaak03,verles19}. For a {more} general problem (without restricting the magnetization to {necessarily lie on a surface),  TV regularization for magnetizations modeled by measures was} discussed in  \cite{barvilhar19}. A specific setup for volumetric magnetizations in terms of \lq\lq ideal bodies'' has been treated, e.g., in \cite{blakely95,parker91}. Furthermore, a somewhat related inverse problem (aiming for the current and not for the magnetization) is discussed in \cite{michel18} and earlier references therein, however, only for domains that are balls. 

In this paper, we address the uniqueness {issue for a fairly general 	magnetic inverse problem with volumetric samples}. 
For simplicity of the presentation, we will consider the magnetic potential instead of the magnetic field, which for most applications {yields}
an equivalent formulation of the problem (see Remark \ref{rem:invisible}). 
We assume {magnetizations to be of $L^{2}$-class on a Lipschitz domain $\ol$ with  fairly general topology (see Section \ref{sec:aux}), and discuss} two subspaces of $L^{2}$-vector fields that yield a unique inversion.
{The first comprises harmonic gradients, and the second comprises
	fields constant on rectangular parallelepipeds. The former is
	a closed subspace of $L^2$-vector fields,
	and the latter a dense subspace thereof.}

In Section \ref{sec:nullconnect},
we {investigate the case where} the magnetic potential created by a magnetization { on $\ol$ is only  known over} some (not necessarily all) connected components $\olt$ of the exterior $\Rd\setminus\overline{\ol}$.
{The} main result {is} Theorem \ref{thm:null_spaces}, {that parametrizes those
	magnetizations creating the zero  magnetic potential in $\olt$, and
	shows in particular} that only the harmonic gradient {component of the Hodge decomposition of a magnetization can
	produce a non-vanishing magnetic potential.}
Based on this characterization, we derive corollaries that {address} some of the geophysical constraints mentioned earlier. In Corollary \ref{cor:cluster}{,} we state that the magnetic potentials of a collection of {separated} magnetized grains cannot cancel each other out. This corollary generalizes the main result from \cite{fabian19} to Lipschitz domains{, and is a consequence of the fact that an invisible magnetization supported on disjoint closed sets has invisible restriction to each of them \cite[Lem. 2.4]{barvilhar19}}. In Corollary \ref{cor:unidirect}{, we further show that if each grain is magnetized
	in a single direction then one can uniquely reconstruct these directions
	provided the} net moment of the magnetization is nonzero. Our final corollary {in} that section (Corollary \ref{cor:suppnull}) implies that it is impossible to deduce the thickness of the magnetized layer of the lithosphere purely from {knowledge of} the magnetic field of the Earth.

{As we said},
Section \ref{sec:nullconnect} {characterizes completely solutions to the inverse problem that are harmonic gradients}. Nevertheless, from the geophysicist's point of view harmonic gradients are rather unnatural. For one thing, harmonic functions cannot vanish on an open domain unless {they are}
identically zero, whereas realistic magnetizations of rock samples can. For another thing, many geophysical applications assume the magnetization to be piecewise constant. Such vector fields are never harmonic {gradients
	unless they are constant}.

In section \ref{sec:unique}, we {focus on piecewise constant vector fields on finitely many rectangular parallelepipeds, that} we call {box-simple vector fields}. Our main result {there is} Theorem \ref{thm:simple-funtions}, {asserting}
that box-simple vector fields are uniquely determined by their magnetic potential. Put differently, the inverse problem is uniquely solvable on the vector space of box-simple vector fields {(if solvable at all in this class). Let us stress that the result does not require {a priori} knowledge on the number, size nor location of the  parallelepipeds on which the field is constant}. To our knowledge, this is the first uniqueness result of this kind. {In fact, it is natural to discretize $L^2$- magnetizations as piecewise constant vector fields on cubes, and the present study also sheds light on the regularizing effect of such a discretization:  Theorem \ref{thm:simple-funtions} entails that the discretized problem is well-posed, but one can surmise it is ill-conditioned. The fact that stability estimates blow up exponentially with the mesh of the cubes is discussed on examples in Section \ref{subsec:sb}}

{The necessary notation, as well as auxiliary results,} are gathered in section \ref{sec:aux}. Throughout the paper{,} we provide some basic examples and try to motivate connections {to  geophysically} relevant situations.

\section{Notation and auxiliar{y} results}\label{sec:aux}

\paragraph{Lipschitz domain{s}.}
An open {set} is Lipschitz if its boundary is locally {a Lipschitz graph; that is: for each boundary point $x$
	and after an appropriate rigid motion (that may depend on $x$),
	the boundary becomes the graph of a Lipschitz function in a neighborhood of $x$} (for more details see for example \cite[p. 83 (4.9)]{adams03}). We do not assume that a Lipschitz {open set} is bounded{,} neither that its boundary is connected. However, we will always assume that the boundary of a Lipschitz {open set} is compact. {Of necessity, the connected components of a Lipschitz open set are finite in number and positively separated:  for if there was a sequence of boundary points no two
	members of which belong to the same connected component of the boundary, then the local Lipschitz graph condition would be violated in every accumulation point of that sequence}. Almost everywhere on {the boundary
	of a Lipschitz open set,} there exists a {unit outwards pointing} normal field $\nu$. {As is customary, we often call a connected open set
	a {\em domain},
	and so we speak of Lipschitz domains to mean connected Lipschitz open sets.}

We single out two special types of Lipschitz open sets that are most useful for applications, we call them {\textit{Lipschitz cluster}}
and {\textit{Lipschitz cheese} respectively}
(see Figure \ref{fig:graincheese} for an example).

A Lipschitz cluster is a Lipschitz {open set} that is a union of {finitely many disjoint, bounded, and simply connected Lipschitz domains}. A Lipschitz cluster is {appropriate to specify} a collection of magnetized grains within an otherwise unmagnetized material; for example, a rock sample with magnetic inclusions, or a human brain with activated local regions.

A Lipschitz cheese is a bounded {and simply connected Lipschitz domain} with a Lipschitz cluster removed, i.e.,  a bounded finitely connected Lipschitz domain.
A Lipschitz cheese is {appropriate to describe} a magnetized material with unmagnetized inclusions, e.g., the lithosphere of the Earth, or a magnetic rock sample with cavities. 
\begin{figure}
	\centering
	\def\svgwidth{12cm}
	%% Creator: Inkscape 1.0 (4035a4f, 2020-05-01), www.inkscape.org
%% PDF/EPS/PS + LaTeX output extension by Johan Engelen, 2010
%% Accompanies image file '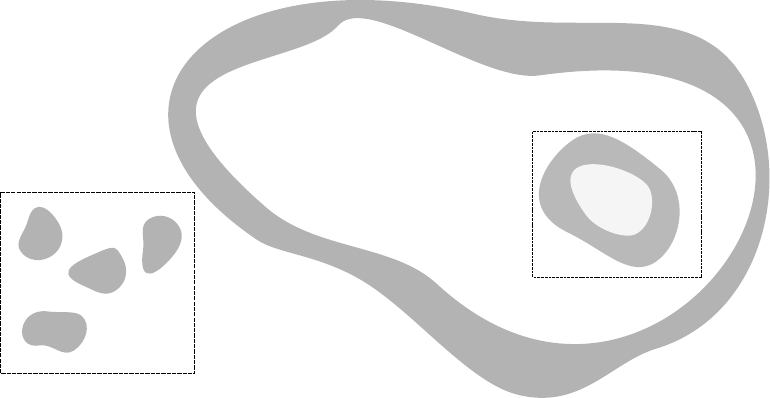' (pdf, eps, ps)
%%
%% To include the image in your LaTeX document, write
%%   \input{<filename>.pdf_tex}
%%  instead of
%%   \includegraphics{<filename>.pdf}
%% To scale the image, write
%%   \def\svgwidth{<desired width>}
%%   \input{<filename>.pdf_tex}
%%  instead of
%%   \includegraphics[width=<desired width>]{<filename>.pdf}
%%
%% Images with a different path to the parent latex file can
%% be accessed with the `import' package (which may need to be
%% installed) using
%%   \usepackage{import}
%% in the preamble, and then including the image with
%%   \import{<path to file>}{<filename>.pdf_tex}
%% Alternatively, one can specify
%%   \graphicspath{{<path to file>/}}
%% 
%% For more information, please see info/svg-inkscape on CTAN:
%%   http://tug.ctan.org/tex-archive/info/svg-inkscape
%%
\begingroup%
  \makeatletter%
  \providecommand\color[2][]{%
    \errmessage{(Inkscape) Color is used for the text in Inkscape, but the package 'color.sty' is not loaded}%
    \renewcommand\color[2][]{}%
  }%
  \providecommand\transparent[1]{%
    \errmessage{(Inkscape) Transparency is used (non-zero) for the text in Inkscape, but the package 'transparent.sty' is not loaded}%
    \renewcommand\transparent[1]{}%
  }%
  \providecommand\rotatebox[2]{#2}%
  \newcommand*\fsize{\dimexpr\f@size pt\relax}%
  \newcommand*\lineheight[1]{\fontsize{\fsize}{#1\fsize}\selectfont}%
  \ifx\svgwidth\undefined%
    \setlength{\unitlength}{221.61597157bp}%
    \ifx\svgscale\undefined%
      \relax%
    \else%
      \setlength{\unitlength}{\unitlength * \real{\svgscale}}%
    \fi%
  \else%
    \setlength{\unitlength}{\svgwidth}%
  \fi%
  \global\let\svgwidth\undefined%
  \global\let\svgscale\undefined%
  \makeatother%
  \begin{picture}(1,0.51703755)%
    \lineheight{1}%
    \setlength\tabcolsep{0pt}%
    \put(0,0){\includegraphics[width=\unitlength,page=1]{lipschitz.pdf}}%
    \put(0.87116897,0.16859955){\color[rgb]{0,0,0}\makebox(0,0)[lt]{\lineheight{1.25}\smash{\begin{tabular}[t]{l}\footnotesize C\end{tabular}}}}%
    \put(0,0){\includegraphics[width=\unitlength,page=2]{lipschitz.pdf}}%
    \put(0.55052097,0.24154192){\color[rgb]{0,0,0}\makebox(0,0)[lt]{\lineheight{1.25}\smash{\begin{tabular}[t]{l}\footnotesize B\end{tabular}}}}%
    \put(0.20424301,0.05737583){\color[rgb]{0,0,0}\makebox(0,0)[lt]{\lineheight{1.25}\smash{\begin{tabular}[t]{l}\footnotesize A\end{tabular}}}}%
  \end{picture}%
\endgroup%

	\caption{The union of grey regions is a Lipschitz domain. In particular it contains the Lipschitz cluster (region A), the Lipschitz cheese (region B), and the special case of the Lipschitz cheese with a single cavity (region C).}
	\label{fig:graincheese}
\end{figure}

To {denote locally constant functions on a
	Lipschitz open set $\Omega$ or on its boundary $\partial\Omega$,} we use the following notation: let $\chi_A$ be the characteristic function {which is $1$ on the set  $A\subset\Rd$ and 0 elsewhere}. For a Lipschitz
{open set $\ol$, we define the spaces}
\begin{align*}
	\RR_{\ol} 	&= \mathrm{span}_{\RR} \{ \chi_{\os_{k}} \ : \ \os_{k} \text{ a connected component of } \ol\}, \\
	\RR_{\bol} 		&= \mathrm{span}_{\RR} \{ \chi_{\bos_{k}} \ : \ \os_{k} \text{ a connected component of } \ol\}.
\end{align*}
If $A$ is a set in $\Rd$, we will denote {by $A^{\outside}$
	the complement $\Rd \setminus \overline{A}$.}

\paragraph{Function spaces.}
The spaces we use in this paper are standard and their detailed definition may be found in {many textbooks} (for example \cite[chap. 3]{adams03} or \cite[chap. 3]{mclean2000strongly}).
Unless {otherwise stated,} we assume for the rest of this section that $\ol$ is a Lipschitz {open set} in $\Rd$ ($d\geq3$). 

\subparagraph{Sobolev spaces.}
The spaces $\ltd \ol$ and $\ltd \bol$ denote the usual { spaces of square integrable} $\RR^{d}$-valued vector fields. The {norms
	are induced by the scalar products}
\begin{align}
	\left\langle \funf, \fung \right\rangle_{\ltd \ol} = \int_{\ol} \funf(x) \cdot \fung(x) \measure{x}
	&&
	\funf, \fung\in\ltd \ol, \\
	\left\langle \funf, \fung \right\rangle_{\ltd {\partial\ol}} = \int_{\partial\ol} \funf(x) \cdot \fung(x) \,\,d\sigma(x)
	&&
	\funf, \fung\in\ltd {\partial\ol},
\end{align}
where $\cdot$ denotes the Euclidean scalar product in $\Rd$ and $\mu$ is
{$d$-dimensional Lebesgue measure while $\sigma$
	is $(d-1)$-dimensional Hausdorff-measure.} 
For {scalar-valued functions,} we simply write $\lt \ol$ or $\lt \bol$.  To distinguish {scalar-valued} functions from vector fields{, we  use} bold-face font for the latter. The space $\loct \ol$ will {designate} locally square integrable functions. 

With $\sobs{1}{\ol}$ we denote the Sobolev space of square integrable functions with square integrable gradients. {By $\sobs{s}{\bol}$ $(0\leq s \leq 1)$, we indicate the standard scale of Hilbertian} Sobolev spaces on the boundary of a Lipschitz domain (for details see for example \cite[p. 96 ff.]{mclean2000strongly}); {in fact we will only need} $s=1/2$. We write $\sobs{-s}{\bol}$ for the topological dual of $\sobs{s}{\bol}$  and denote the
{duality} pairing between $f\in \sobs{-s}{\bol}$ and $g\in\sobs{s}{\bol}$  by $\left(f,g\right)$. We denote the support of a function $f$ by $\supp{f}$.

As usual, $C^{\infty}_{0}(\ol)$ denotes the space of smooth functions with compact support in $\ol$. Subsequently, we let
	\begin{equation}
		\sobdivo \ol		= L^{2} \text{-closure of } \left\{	\funf \ : \ \funf \in C^{\infty}_0(\ol,\Rd), \ \divv \funf  = 0\right\}{,}
	\end{equation}
{to be interpreted as} the space of divergence-free fields with vanishing normal {at the boundary of $\Omega$.} We will also use the homogeneous Sobolev spaces
\begin{align*}
	\sobloc{1}{\ol} 	&= \left\{ f \ : \ f \in \loct \ol, \ \nabla f \in \ltd \ol	\right\}, \\
	\sobloco{1}{\ol} 	&= \text{closure of } C^{\infty}_{0} (\ol) \text{ in the norm } \| \nabla \cdot \|_{\ltd \ol},
\end{align*}
It is clear from the definition that the space of gradients of $\sobloco{1}{\ol}${-functions}, denoted $\nabla \sobloco{1}{\ol}$, is closed in $\ltd \ol$. %
When $\ol$ is bounded, it follows from the Poincar\'e inequality that $\sobloc{1}{\ol}= \sobs{1}{\ol}$.

\subparagraph{Harmonic Sobolev space.}
We write $\harmo \ol$ {for the space of harmonic functions on $\ol$ that lie} in $\sobloc{1}{\ol}$ and {vanish} at infinity (if $\ol$ is unbounded). Then{,} $\nabla \harmo \ol$ will denote the space of gradients of functions in $\harmo \ol$. {It is a closed subspace of $L^2(\Omega;\mathbb{R}^d)$, 	however functions in $\harmo\ol$ need not belong to $H^1(\ol)$ when $\ol$ is unbounded. For example, if we let $\ol:=\mathbb{R}^d\setminus \overline{\mathbb{B}_R(0)}$ then $x\mapsto1/|x|$ lies in $\harmo\ol$.}

\paragraph{The trace and the normal derivative.} 
{Below, we review boundary traces of Sobolev functions and
	of gradients of harmonic functions.} %

\subparagraph{Trace.}
For a Lipschitz domain $\ol${, we denote the trace operator
	on  $\bol$} by $\tr_{\bol}$. It is a bounded linear operator 
\begin{align}
	\tr_{\bol}\colon\sobs{1}{\ol}\to\sobs{\half}{\bol}
\end{align}
that acts on smooth functions by restriction{:} $\tr_{\bol}(f) = f\vert_{\bol}$ when $f$ is in $C^{\infty}(\overline{\ol})$. Since the trace of a function depends only on its  {behaviour} near the boundary, {the trace operator extends  continuously} to $\sobloc{1}{\ol}$ (and thus to $\harmo \ol$). For every {$f\in\sobloco{1}{\ol}$ one has} $\tr_{\bol}(f)=0$. {If $\Omega$ is bounded then the converse holds, however when $\ol$ is unbounded  the subspace of functions with
  zero trace in $\sobloc{1}{\ol}$ can be strictly larger than $\sobloco{1}{\ol}$,} see for example \cite[thm. 2.15 and cor. 2.17]{Simader:1996aa}.
{An important feature of the trace operator is that it is surjective: if $h\in H^{1/2}(\partial\Omega)$ there is $g\in \sobloc{1}{\ol}$ with
  $\tr_{\bol}(g)=h$ (we may even choose $g$ to be harmonic, see discussion 
after \eqref{BDLO}).}
\subparagraph{Normal derivative.}
{Every $f\in\harmo \ol$ has a well{-}defined normal derivative $\partial f/\partial\nu\in\sobs{-\half}{\bol}$, {acting on $H^{1/2}(\partial\Omega)\ni h=\tr_{\bol}(g)$ according to the rule }
\begin{align}
		\label{intpp}
		\left(	\pd{f}{\nu} ,\tr_{\bol}(g)\right) 
		= \left\langle \nabla f , \nabla g\right\rangle_{\ltd \ol},
		&&
		g \in \sobloc{1}{\ol}{,}
	\end{align}
        {where we note that the right hand side of \eqref{intpp}
          indeed depends only on $\tr_{\bol}(g)$ by \cite[thm. 2.4]{Simader:1996aa} and the Schwarz inequality.}
          We will refer to the formula \eqref{intpp}, as  to \enquote{integration by parts}.}

\paragraph{Potentials.} {Single and double layer potentials 
	have been studied extensively in potential theory.  For details, we refer the reader to the sizeable literature on this topic} (for example \cite{wermer74,ver84, dibenedetto95, fabmenmit98, Mitrea:1997aa,mitrea09}). Below, we set up notation and discuss a couple of points regarding unbounded domains, which are rarely addressed in the literature. Let $\omega_{d}$ be the area of {the} unit sphere in $\Rd$ and {put} $C_{d}=(d-2)\omega_{d}${,} so that the fundamental solution of the Laplacian for $d\geq 3$ {is given by}
\begin{align*}
	N(x) = \frac{1}{C_{d}} \frac{1}{\vert x \vert^{d-2}}
	\qquad
	\left(x \in \RR^{d}\setminus\{0\}\right).
\end{align*}
Recall that it is harmonic everywhere except at the origin.

\subparagraph{Magnetic potential.}
For $\funf \in \ltd \ol$ we define the magnetic potential of $\funf$ by
\begin{align*}
	\p_{\ol^{\outside}\ot\ol} \left(	\funf	\right) (x) 
	= \int_{\ol} \nabla N(x-y) \cdot \funf(y) \ \measure{y} 
	= \left\langle \nabla N(x - \cdot), \funf \right\rangle_{\ltd \ol}, 
	\quad
	\left(x \in \ol^{\outside}\right).
\end{align*}
The {rationale behind the cumbersome subscript} is as follows: the {origin} of the arrow (here $\ol$) denotes the {set
	over which we integrate; the arrowhead} (here $\ol^{\outside}$) denotes  the region {where we evaluate the magnetic potential}. That is{:} $P_{\ol^{\outside}\ot\ol}$ maps a function defined on $\ol$ to a function defined on $\ol^{\outside}$. If the support of the field $\funf$ is smaller than the integration region{,} we implicitly extend $\funf$ by zero to match the integration domain. If the support of $\funf$ is larger than the integration region{,} we implicitly restrict $\funf$ to the integration domain.  Even though this {notation convention} is not a common one, it will turn out {to be} useful. {Of course, the integral defining $P_{\ol^{\outside}\ot\ol}(x)$ makes good sense
	for $x\in\Omega$ as well, since it  is absolutely convergent for $\mu$-a.e.
	$x$, compare \cite[Prop. 214]{barvilhar19}. In fact, the magnetic potential is well-defined as a member of $\sobloc{1}{\mathbb{R}^d}$, but we shall not use this here.}

\subparagraph{Single layer potential.}
The single layer potential of $f\in\sobs{-\half}{\bol}$  is {the} harmonic function in $\Rd\setminus\bol$ defined by
\begin{align*}
	\s_{\Rd\setminus\bol\ot\bol} (f) (x) = \int_{\bol} N(x-y) f(y) \measure{y}
	= \left(f, N(x-\cdot)\right),
	\qquad
	\left(x\in \Rd\setminus\bol\right)
\end{align*}
The {rationale behind} the subscript is the same as above. The 
{single layer potential $\s_{\Omega\ot\bol}$
	(resp. $\s_{\Omega^c\ot\bol}$) maps $H^{-1/2}(\partial\ol)$ into
	$\sobloc{1}{\ol}$ (resp. $\sobloc{1}{\Omega^c}$) continuously; if
	$\Omega$ is bounded this follows from  \cite[thm. 3.1]{fabmenmit98}, and 
	when $\Omega$ is unbounded we get the result from the bounded case and a direct estimate of the derivative at $x$ for large $|x|$.
	Moreover, $\tr_{\bol}(\s_{\Omega\ot\bol}f)=\tr_{\bol}(\s_{\Omega^c\ot\bol}f)$,
	as follows by density from the case where $f\in L^2(\partial\Omega)$ which is
	standard \cite[lem. 1.8]{ver84}.
	Thus, the trace of the single layer potential
	on $\partial\Omega$ defines} a bounded linear operator:
\begin{align}
	\s_{\bol} \colon \sobs{-\half}{\bol} \to \sobs{\half}{\bol}.
\end{align}
This operator is invertible \cite[thm. 8.1 (5)]{fabmenmit98} and symmetric. 
{The latter means} that for every $f\in\sobs{-\half}{\bol} $ and $g\in\sobs{\half}{\bol}${,} we have 
\begin{align}
	\left\langle\s_{\bol}\left(f\right),g\right\rangle_{\lt \bol} 
	= \left( f, \s_{\bol}\left(g\right) \right){,}
\end{align}
{where we observe, since $H^{1/2}(\partial\Omega)\subset L^2(\partial\Omega)$,  that indeed $\s_{\bol}\left(g\right)\in H^{1/2}(\partial\Omega)$ for it even belongs to the smaller Sobolev space
	$W^{1,2}(\partial\Omega)$ that we did not introduce, 
	see \cite[lem. 1.8]{ver84}.}
\subparagraph{Double layer potential.}
\label{DLPS}
{The double layer potential of $f\in H^{1/2}(\partial\Omega)$
	is the harmonic function defined as}
\begin{align}
	{\mathcal{K} (f) (x) = \int_{\bol} \nabla N(y - x) \cdot \nu (y) \  f(y) {d\sigma(y)},\qquad x\in\mathbb{R}^d\setminus\partial\Omega.}
\end{align}
{The  boundary double layer potential is 
	the singular integral  operator:}
\begin{align}
  \dlay (f) (x) = p.v. \int_{\bol} \nabla N(y - x) \cdot \nu (y) \  f(y) {d\sigma(y),\qquad x\in\partial\Omega}.
  \label{BDLO}
\end{align}
{The latter is bounded on $\sobs{\half}{\bol}$, and the trace of $K(f)|_{\Omega}$ (resp. $K(f)|_{\mathbb{R}^d\setminus\overline{\Omega}}$) on $\partial \Omega$ is 	$\frac{1}{2}I+\dlay$ (resp. $-\frac{1}{2}I+\dlay$).}
{  Here and below, $I$  denotes the identity operator on
	whichever   functional space is involved.}

{The double layer potential is a classical tool to solve the Dirichlet problem for the Laplacian in a bounded Lipschitz domain $\Omega$:
	for $g\in H^{1/2}(\partial \Omega)$, there is a unique $u\in H^1(\Omega)$
	with $\tr_{\bol}u=g$ which is given by $\mathcal{K}(\frac{1}{2}I+\dlay)^{-1}g$.}
For {this and more on Sobolev and Besov spaces
	on which $\pm\frac{1}{2}I+\dlay$ is invertible,
	see \cite[thm. 8.1]{fabmenmit98}. When $\Omega$ is unbounded, the Dirichlet problem with data in $H^{1/2}(\partial\Omega)$ can be solved uniquely
	in $\sobloc{1}{\ol}$; this follows easily by Kelvin transform
	from the result on bounded domains; see, e.g., \cite{Simader:1996aa} for the definition and properties of the Kelvin transform. However, one generally has to renounce membership of $u$ in $H^1(\Omega)$, in particular $u$ needs not be the double layer potential of a $H^{1/2}(\partial\Omega)$-function.}

{By $\dlay^{\star}$ we will denote the Banach space adjoint of $\dlay$, which is a bounded linear operator on $\sobs{-\half}{\bol}$.} The normal derivative of $\s_{\ol\ot\bol} (f)$ (resp. $\s_{\ol^c\ot\bol} (f)$) on $\partial\Omega$
	is $-\frac{1}{2}I+\dlay^{\star}$ (resp. $\frac{1}{2}I+\dlay^{\star}$). The single layer potential is
	a classical tool to solve the Neumann problem for the Laplacian in a bounded Lipschitz domain $\Omega$:
	for $g\in H^{-1/2}(\partial \Omega)$ satisfying the (necessary) compatibility condition
	$\left(g, c\right) =0$ for all $c \in \RR_{\bol}$, there is 
	$u\in H^1(\Omega)$, unique up to an additive constant,
	with $\partial u/\partial\nu=g$ which is given by $\mathcal{S}(-\frac{1}{2}I+\dlay^*)^{-1}g$; see see \cite[thm. 9.2]{fabmenmit98}.
{Actually, the same result holds when  $\Omega$ is unbounded
	and $u$ is required to vanish at infinity,
	except that $u\in\sobloc{1}{\ol}$ but $u\notin H^1(\Omega)$  in general.
	Indeed,  a solution is still given by the previous formula, and uniqueness up to a constant follows from an integration by parts that yields
	the identity $\|\nabla u\|^2_{L^2(\Omega,\mathbb{R}^d)}=(u,
	\partial u/\partial\nu)_{\partial\Omega}$.} 
For {this and more on Sobolev and Besov spaces
	on which $\pm\frac{1}{2}I+\dlay$ is invertible,
	see \cite[thm. 8.1]{fabmenmit98}.}

We {turn to a} result that we use throughout the paper. Even though it is known in much greater generality (see for example the Hodge decomposition \cite[chap. 2]{schwarz2006hodge}), {it is easy in the case at hand to give a proof that we include for the convenience of the reader.}

\begin{lem}[Helmholtz-Hodge-decomposition]\label{lem:helmholtz_harmonic}
	Let $\ol$ be a Lipschitz {open set} in $\Rd$ ($d\geq3$). The space $\ltd \ol$ is an orthogonal direct sum 
	\begin{align*}
		\ltd \ol	= \nabla \sobloco{1}{\ol} \oplus \nabla \harmo \ol \oplus \sobdivo \ol.
	\end{align*}
\end{lem}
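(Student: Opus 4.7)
The plan is to first verify pairwise orthogonality of the three summands, next build the decomposition via orthogonal projection onto the two closed gradient subspaces, and finally identify the residue with an element of $\sobdivo\ol$. The last identification is the only step that is not completely formal.

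\textbf{Pairwise orthogonality.} For $\phi\in C^{\infty}_{0}(\ol)$, $h\in\harmo\ol$ and $\vect\in C^{\infty}_{0}(\ol;\Rd)$ with $\divv{\vect}=0$, I would apply the divergence theorem on any bounded Lipschitz subdomain of $\ol$ containing the relevant compact support. Since $h$ is smooth and harmonic in $\ol$ by elliptic regularity, and since the boundary terms vanish (as $\phi$ and $\vect$ have compact support in $\ol$), this yields
\begin{equation*}
  \langle\nabla\phi,\nabla h\rangle_{\ltd\ol}=0,\quad \langle\nabla\phi,\vect\rangle_{\ltd\ol}=0,\quad \langle\nabla h,\vect\rangle_{\ltd\ol}=0.
\end{equation*}
Density of $C^{\infty}_{0}(\ol)$ in $\sobloco{1}{\ol}$, of divergence-free $C^{\infty}_{0}(\ol;\Rd)$ in $\sobdivo\ol$, and continuity of the $L^{2}$ inner product extend these identities to all members of the three subspaces.

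\textbf{Decomposition.} Given $\funf\in\ltd\ol$, project $\funf$ orthogonally onto the closed subspace $\nabla\sobloco{1}{\ol}$ to obtain $\nabla\phi$, and set $\mathbf{r}_{1}:=\funf-\nabla\phi$; the relation $\mathbf{r}_{1}\perp\nabla\sobloco{1}{\ol}$ is equivalent to $\divv{\mathbf{r}_{1}}=0$ distributionally in $\ol$. Next, project $\mathbf{r}_{1}$ orthogonally onto the closed subspace $\nabla\harmo\ol$ to obtain $\nabla h$, and set $\mathbf{w}:=\mathbf{r}_{1}-\nabla h$. Then $\funf=\nabla\phi+\nabla h+\mathbf{w}$, with $\mathbf{w}$ orthogonal to both gradient subspaces. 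The first orthogonality still gives $\divv{\mathbf{w}}=0$, so $\mathbf{w}\cdot\nu$ is well-defined in $\sobs{-\half}{\bol}$. The second, combined with the standard Green identity $\langle\mathbf{w},\nabla h\rangle_{\ltd\ol}=(\mathbf{w}\cdot\nu,\tr_{\bol}h)$ valid for divergence-free $\mathbf{w}$, yields $(\mathbf{w}\cdot\nu,\tr_{\bol}h)=0$ for every $h\in\harmo\ol$; surjectivity of $\tr_{\bol}\colon\harmo\ol\to\sobs{\half}{\bol}$ (which follows from solvability of the Dirichlet problem, as noted after \eqref{BDLO}) then forces $\mathbf{w}\cdot\nu=0$ on $\bol$.

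\textbf{Main obstacle.} The only nontrivial step left is to show that any $\mathbf{w}\in\ltd\ol$ with $\divv{\mathbf{w}}=0$ in $\ol$ and $\mathbf{w}\cdot\nu=0$ on $\bol$ lies in $\sobdivo\ol$. This is a classical density fact for Lipschitz domains: extend $\mathbf{w}$ by zero to $\Rd$ (the extension is still divergence-free precisely because the normal trace vanishes), mollify to obtain smooth divergence-free approximants on $\Rd$, and restore compact support in $\ol$ via truncation combined with a Bogovskii-type corrector that preserves divergence-freeness. This density result is the main technical point of the whole argument; all other steps reduce to Hilbert-space projection or a direct integration by parts.
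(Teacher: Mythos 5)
Your argument takes a genuinely different route from the paper's. The paper starts from the two-term Helmholtz decomposition $\ltd\ol=\nabla\sobloc{1}{\ol}\oplus\sobdivo\ol$, imported wholesale by citation, and then only has to split the gradient part: a field $\nabla\varphi\in\nabla\sobloc{1}{\ol}$ orthogonal to $\nabla\sobloco{1}{\ol}$ is in particular orthogonal to gradients of test functions, hence $\varphi$ is weakly harmonic and Weyl's lemma puts it in $\harmo\ol$; the converse is one integration by parts using $\tr_{\bol}(g)=0$. That is the entire proof. You instead build the decomposition by two successive projections and then must identify the residue $\mathbf{w}$ (divergence-free, vanishing normal trace) with an element of $\sobdivo\ol$. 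Be aware that this identification \emph{is} the nontrivial content of the Helmholtz decomposition the paper cites -- the characterization $\sobdivo\ol=\{\mathbf{w}\in\ltd\ol:\ \divv{\mathbf{w}}=0,\ \mathbf{w}\cdot\nu=0\}$ is classical for Lipschitz domains (Temam, Simader--Sohr) but is exactly as deep as the result you are avoiding citing. So your route buys nothing in terms of self-containedness unless you actually prove that density fact, whereas the paper's route reduces everything to Weyl's lemma plus one citation.

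Two concrete cautions. First, your sketch of the density step does not work as written: mollifying the zero-extension of $\mathbf{w}$ produces divergence-free fields supported in a \emph{neighborhood} of $\overline{\ol}$, not compactly inside $\ol$, and the proposed fix (cutoff plus Bogovskii corrector) requires showing that the corrector applied to $\nabla\eta_\eps\cdot\mathbf{w}_\eps$ tends to zero in $\ltd\ol$, which is not automatic since $\nabla\eta_\eps$ blows up like $1/\eps$ near $\bol$ where $\mathbf{w}$ is merely $L^2$; the standard proofs instead use the local Lipschitz graph structure and inward translations chart by chart. If you want this route, cite the characterization rather than sketch it. Second, your Green identity $\left\langle\mathbf{w},\nabla h\right\rangle_{\ltd\ol}=\left(\mathbf{w}\cdot\nu,\tr_{\bol}h\right)$ is standard for $h\in\sob{\ol}$, but for unbounded $\ol$ the functions of $\harmo\ol$ need not lie in $\sob{\ol}$ (as the paper notes), so you need an extra truncation-at-infinity argument using the decay $h=O(|x|^{2-d})$ to kill the boundary term on large spheres. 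The pairwise-orthogonality computations and the projection steps themselves are fine.
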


\begin{proof}
	The well{-}known Helmholtz decomposition says that the space $\ltd \ol$ is an orthogonal direct sum (for example \cite{Simader:1992aa}) %
	{:}
	\begin{align}
		\ltd \ol	= \nabla \sobloc{1}{ \ol} \oplus \sobdivo \ol. \label{eqn:h2}
	\end{align}
	Thus, we only need to show that  $\nabla \sobloc{1}{ \ol} = \nabla \sobloco{1}{ \ol} \oplus \nabla \harmo \ol$.
	{For this, assume that $\nabla \varphi\in\nabla \sobloc{1}{\ol}$ is} orthogonal to $\nabla\sobloco{1}{\ol}$. In particular $\nabla \varphi$ is then orthogonal to gradients of smooth functions with compact support in $\ol$. Thence, $\varphi$ is weakly harmonic, and thus harmonic by Weyl's lemma. {Consequently, $\varphi\in \harmo \ol$ so that
		$\nabla\varphi\in\nabla \harmo \ol$ (cf. \cite[thm. 3.5]{Simader:1996aa}). Conversely, if $\varphi\in \harmo \ol$ and $g\in \sobloco{1}{\ol}$, we get from an} integration by parts and the {condition $\tr_{\bol}(g)=0$
		(which holds for functions in $\sobloco{1}{\ol}$) that $\left\langle \nabla f , \nabla g\right\rangle_{\ltd \ol}=0$.
		Consequently, $\nabla  \harmo \ol$ is the orthogonal space to 
		$\nabla \sobloco{1}{\ol}$ in $\sobloco{1}{\ol}$}.
	Since $\nabla \sobloco{1}{\ol}$ is closed in $\ltd \ol$, 
	{this achieves the proof}.
\end{proof}

\section{Harmonic gradients}\label{sec:nullconnect} 

In this section we identify the subspace of harmonic gradients on Lipschitz domains that can be uniquely recovered from their magnetic potential. In the rest of the paper we will denote the null-space of an operator $T$ by $\nulls[T]$ and use the following terminology.

\begin{defi}
	We say that a vector field $\funf\in\ltd \ol$  is \textit{invisible in $A \subset \ol^{\outside}$} if 
	\begin{align*}
		\p_{A\ot\supp{\funf}}\left(\funf\right) = 0.
	\end{align*}
	If $\funf$ is invisible in $\supp{\funf}^{\outside}$ then we say that $\funf$ is in the null-space of the magnetic potential or that it is simply \textit{invisible}. If $\funf$ is not invisible, we call it \textit{visible}.     
\end{defi}

\begin{rem}\label{rem:invisible}
	In the literature the term ``invisible'' sometimes refers to those magnetizations whose magnetic field vanishes, instead of the magnetic potential. If $\funf$ is invisible in $A\subset \Rd$ and $A$ is unbounded then both definitions coincide, since $\p_{A\ot\supp{\funf}} (\funf)$ {tends to} zero at infinity. But if $A$ is bounded then a constant magnetic potential will have a vanishing gradient, i.e., a vanishing magnetic field. We caution the reader to {take into account this slight} difference when comparing our result{s} to other sources in the literature{, for instance \cite{barlebnem21}}.
	Since { in practice data are typically available in the unbounded component of the complement} of the object of interest, this {difference should be of little significance in applications.}
\end{rem}

\subsection{Uniqueness for harmonic gradients}
{Hereafter we make the following convention: when
	$\Omega$ is a Lipschitz open set and $E$ a union of connected components of
	$\partial\Omega$, we consider $H^{1/2}(E)$ (resp. $H^{-1/2}(E)$) as a subspace of $H^{1/2}(\partial\Omega)$ (resp. $H^{-1/2}(E)$), using the extension
	by  zero of functions
	(resp. distributions) to $\partial\Omega\setminus E$.}
\begin{thm}\label{thm:null_spaces}
	Let $\cheese$ be a bounded Lipschitz {open set} in $\Rd$ ($d\geq3$){,} and let $\olt$ be an arbitrary collection of connected components of $\ol^{\outside}$. The space of {vector fields in $L^2(\Omega;\mathbb{R}^d)$ that are invisible in} $\olt$ is
	\begin{align}
		\nulls	\left[	\p_{ \olt\ot\cheese	\vpsum}	\right] 
		= \left( \vpsum
		\left. \nabla\harmo {\olt^{\outside}} \right|_{\cheese} 
		\right)^{\perp}
		= \nabla \sobo \cheese \oplus \sobdivo \cheese \oplus \nabla \mathscr{I}_{\olt} \label{eq:null-space-general}{,}
	\end{align}
	where {the superscript $\perp$ refers to the orthogonal complement in $L^2(\ol;\mathbb{R}^d)$ and} $\mathscr{I}_{\olm}$ is the space of harmonic functions $\varphi$ {in $\harmo{\ol}$} of the form
	\begin{align}
		\varphi =  \s_{\cheese\ot\bcheese} \left(-\frac{1}{2}I + \dlay^{\star}\right)^{-1}{\s_{\bcheese}}^{-1} \left(f\right) \label{eq:form-of-null-harmonics}
	\end{align}
	parametrized by {those} functions  $f\in\sobs{\half}{\bol\setminus\bolm}$ that satisfy
	\begin{align*}
		\left({f, \s_{\bol}}^{-1}\left(c\right)\right)
		=0 
		&& 
		\text{for every } c \in \RR_{\bol}.
	\end{align*}
\end{thm}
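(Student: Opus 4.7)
The plan is to use the Helmholtz--Hodge decomposition to reduce to the harmonic--gradient component of $\mathbf{f}$, identify the magnetic potential there with a single-layer potential on $\bol$, and then invoke Dirichlet and Neumann solvability to parametrize the invisible fields; the first equality then follows from a Green's identity reciprocity.

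Write $\mathbf{f}=\nabla f_{0}+\nabla f_{H}+\mathbf{f}_{\mathrm{div},0}$ according to Lemma \ref{lem:helmholtz_harmonic}. For $x\in\cheese^{\outside}$ the function $y\mapsto N(x-y)$ lies in $\harmo{\cheese}$, so \eqref{intpp}, the condition $\tr_{\bol}f_{0}=0$, and the Hodge orthogonality $\sobdivo{\cheese}\perp\nabla\harmo{\cheese}$ give
\[
  \p_{\cheese^{\outside}\ot\cheese}(\mathbf{f})(x)=\langle\nabla N(x-\cdot),\nabla f_{H}\rangle_{\ltd{\cheese}}=\s_{\cheese^{\outside}\ot\bol}(h)(x),\qquad h:=\pd{f_{H}}{\nu}\in\sobs{-\half}{\bol}.
\]
Hence $\nabla\sobo{\cheese}\oplus\sobdivo{\cheese}\subset\nulls[\p_{\olt\ot\cheese}]$, and invisibility of $\mathbf{f}$ in $\olt$ is equivalent to $u:=\s(h)$ vanishing on $\olt$. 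Since $u$ is harmonic on each connected component of $\olt$, continuous across $\bol$ with trace $f:=\s_{\bol}(h)$, and decays at infinity on any unbounded component (using $d\geq 3$), Dirichlet uniqueness gives $u|_{\olt}=0$ iff $f|_{\bolt}=0$, i.e.\ $f\in\sobs{\half}{\bol\setminus\bolt}$. The Neumann compatibility $(h,c)=0$ for $c\in\RR_{\bol}$ becomes, by the symmetry of $\s_{\bol}$, the condition $(f,\s_{\bol}^{-1}(c))=0$. Conversely, for any such $f$, the interior Neumann solver produces $f_{H}=\s_{\cheese\ot\bol}(-\half I+\dlay^{\star})^{-1}\s_{\bol}^{-1}(f)$ up to a constant on each component of $\cheese$, whose gradient is the $\nabla\varphi$ of the form \eqref{eq:form-of-null-harmonics}; this establishes the second equality.

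The $\supset$ inclusion in the first equality is immediate, as $\nabla N(x-\cdot)|_{\cheese}\in\nabla\harmo{\olt^{\outside}}|_{\cheese}$ for every $x\in\olt$. For $\subset$, take $\mathbf{f}\in\nulls[\p_{\olt\ot\cheese}]$ and $\varphi\in\harmo{\olt^{\outside}}$, and use Hodge and \eqref{intpp} to write $\langle\mathbf{f},\nabla\varphi\rangle_{\ltd{\cheese}}=(h,g)$ with $g:=\tr_{\bol}\varphi|_{\cheese}$. The decisive observation is that $u|_{\olt}=0$ forces the exterior normal derivative of $u$ from the $\olt$-side to vanish on $\bolt$, so $(\half I+\dlay^{\star})h$ is supported on $\bol\setminus\bolt$. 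Applying Green's identity on $\cheese$ and on each component of $T:=\cheese^{\outside}\setminus\overline{\olt}$---using that $\varphi$ is harmonic across $\bol\setminus\bolt\subset\olt^{\outside}$, so its interior and exterior normal derivatives there agree---one obtains $((-\half I+\dlay^{\star})h,g)_{\bol}=(\pd{\varphi}{\nu}|^{-},f)_{\bol}=((\half I+\dlay^{\star})h,g)_{\bol}$, where the last equality uses that both $f$ and $(\half I+\dlay^{\star})h$ live on $\bol\setminus\bolt$. Subtracting yields $(h,g)=0$, as required.

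The main obstacle is the bookkeeping in this reciprocity step: tracking carefully the orientation of the unit normal on each bounded component of $\olt$ and of $T$ (opposite to the outward normal of $\cheese$), and interpreting the $\sobs{-\half}{\bol}\times\sobs{\half}{\bol}$ duality pairings when the arguments are supported on complementary parts of $\bol$, so as to correctly combine the interior and exterior jump relations for the normal derivative of the single-layer potential.
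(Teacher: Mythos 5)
Your proof of the second equality in \eqref{eq:null-space-general} follows the same path as the paper's: reduce to the harmonic-gradient component via the Helmholtz--Hodge decomposition, rewrite the potential as the single layer potential of $h=\partial f_H/\partial\nu$, convert invisibility in $\olt$ into the vanishing of the trace $f=\s_{\bol}(h)$ on $\bolt$ by Dirichlet uniqueness, and solve the resulting Neumann problem subject to the compatibility condition. For the first equality you take a genuinely different route. The paper deduces it from the second: it computes $\nabla\harmo{\cheese}\ominus\nabla\mathscr{I}_{\olt}$ by showing that the admissible traces are exactly those of the form $\s_{\bcheese}(k)+c$ with $k\in\sobs{-\half}{\bolt}$ and $c\in\RR_{\bol}$, and then recognizes $\s_{\cheese\ot\bcheese}(k)$ as the restriction of a function harmonic on all of $\olt^{\outside}$. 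You instead prove the two inclusions directly: the easy one because $N(x-\cdot)\in\harmo{\olt^{\outside}}$ for $x\in\olt$, and the hard one by a reciprocity argument, pairing $u=\s(h)$ against an arbitrary $\varphi\in\harmo{\olt^{\outside}}$ through Green's identities on $\cheese$ and on $T=\cheese^{\outside}\setminus\overline{\olt}$, and exploiting that $u\equiv 0$ on $\olt$ kills both the trace and the $\olt$-side normal derivative of $u$ on $\bolt$, while $\varphi$ has no jump across $\bol\setminus\bolt\subset\olt^{\outside}$; subtracting the interior and exterior jump relations for $\partial u/\partial\nu$ then isolates $(h,g)$. This argument is correct: the duality pairings do localize to unions of boundary components since the components of $\bol$ are positively separated, and the boundary term at infinity in Green's identity on the unbounded component of $T$ vanishes because $d\geq 3$. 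Its advantage is that it needs no single-layer representation of $\harmo{\olt^{\outside}}$ and, in fact, establishes the first equality independently of the Neumann parametrization; the paper's route, by contrast, yields the explicit description of $\nabla\harmo{\cheese}\ominus\nabla\mathscr{I}_{\olt}$ as a by-product. The bookkeeping you flag (orientation of normals on $T$, pairings with densities supported on complementary parts of $\bol$) is exactly the technical content to be checked, and it goes through.
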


\begin{proof}
	{Clearly, vector fields in $L^2(\Omega;\mathbb{R}^d)$ that are invisible in $\olt$ form a closed vector space, by the dominated convergence theorem. Observe} that the {evaluation of the} magnetic potential {of $\funf\in L^2(\Omega:\mathbb{R}^d)$ in a point $x\in\Omega^c$ is the}  scalar product {of $\funf$ with the gradient of  the harmonic function $y\mapsto N(x-y)$ on $\Omega$.} Hence, the orthogonal complement of harmonic gradients, {which is  $\left(\nabla\harmo \cheese\right)^{\perp}=\nabla \sobloco{1} {\cheese} \oplus \sobdivo \cheese$ by Lemma
		\ref{lem:helmholtz_harmonic}}, is always invisible {in $\ol^{\outside}$ and  a fortiori in $\olt$}. 
	
	{Next, let us show that} those invisible fields in $\olt$
	{of the} form $\funf=\nabla \varphi$ for some $\varphi\in\harmo\cheese$ {match the space $\nabla \mathscr{I}_{\olt}$}. For
	{such} fields we can use integration by parts{,}  and rewrite the {vanishing of the} magnetic potential {for $x\in\olt$} as
	\begin{align}
		0	=\p_{{\olm}\ot\ol} \left(	\funf	\right) (x)
		= \left\langle\nabla N(x - \cdot ),\nabla \varphi \right\rangle_{\ltd \ol} 
		= \s_{{\olm}\ot\bol} \left(\pd{\varphi}{\eta}\right)(x).\label{eq:pot_as_single}
	\end{align}
	{By uniqueness of the solution to the Dirichlet problem
		in $H^1(O)$  with given trace in $H^{1/2}(\partial O)$ for a bounded Lipschitz domain $O$, %
		and uniqueness of the solution to the Dirichlet problem in $\sobloc{1}{A}$  with given trace in $H^{1/2}(\partial A)$ and limit zero  at infinity for an unbounded Lipschitz domain $A$ (having compact boundary), \eqref{eq:pot_as_single} is equivalent to say that the trace of $\s_{\olm\ot\bol} \left(\pd{\varphi}{\eta}\right)$ 
		vanishes on the subset $\bolt$ of $\partial\Omega$}, {while on the complementary part} $\bol\setminus\bolm$ the trace can be arbitrary. Therefore, we {may} parametrize the space of harmonic gradients {$\nabla\varphi$} invisible in $\olt$ by functions $f \in \sobs{\half}{\bol\setminus\bolm}$ via  the correspondence
	\begin{align}
		\s_{\bcheese}\left(\pd{\varphi}{\eta}\right) = f,
		\label{eq:condition-one}
	\end{align}
	where{, according to our convention,} we implicitly extend $f$ by zero to the whole {of} $\bol$. 
	Since {$\s_{\bcheese}:H^{-1/2}(\partial\Omega)\to H^{1/2}(\partial\Omega)$} is invertible by \cite[thm. 4.1 (9)]{Mitrea:1997aa}, we can reformulate this {correspondence as solving} a Neumann problem: for some $f\in\sobs{\half}{\bol\setminus\bolm}$ the function $\varphi$ has to {satisfy:}
	\begin{align}
		\begin{cases}
			\varphi 			\in \sob \ol \\
			\Delta \varphi 		= 0 & \text{on } \ol \\
			\partial_{\eta}\varphi 	= \s_{\bcheese}^{-1}(f) & \text{on } \bol.
		\end{cases}
		\label{eq:Neumann-prob}
	\end{align}
	A solution to this problem exists if and only if $f$ satisfies the
	{conditions}
	\begin{align}
		\left({\s_{\bol}}^{-1}(f), c\right) =0 
		=\left({f, \s_{\bol}}^{-1}\left(c\right)\right)
		\qquad 
		\textnormal{for all }c \in \RR_{\bol},
		\label{eq:condition_for_null}
	\end{align} 
	where the first equality is the {well-known compatibility
		condition for the Neumann problem (see \cite[cor. 9.3]{fabmenmit98} and the remark following the proof)}, and the second equality follows since the single layer potential is symmetric. 
	The solution of \eqref{eq:Neumann-prob} {can then be expressed
		as (cf. \cite[thm. 9.2]{fabmenmit98}):}
	\begin{align}
		\varphi = \s_{\cheese\ot\bcheese} \left(-\frac{1}{2}I+\dlay^{\star}\right)^{-1} {\s_{\bcheese}}^{-1} \left(f\right), \label{eq:harm_grad}
	\end{align}
	which defines the space $\mathscr{I}_{\olt}$.
	This proves {that}
	\[{
		\nulls[	\p_{ \olt\ot\cheese \vpsum}]= \nabla \sobo \cheese \oplus \sobdivo \cheese \oplus \nabla \mathscr{I}_{\olt},}\]
	which is the second equality in \eqref{eq:null-space-general}.
	
	To prove the first {one,}
	i.e. $\nulls[	\p_{\olt\ot\cheese	\vpsum}]= \left(\nabla\harmo {\olt^{\outside}}|_{\cheese} \right)^{\perp}$, we need to characterize
	{ $\nabla \harm \ol \ominus \nabla \mathscr{I}_{\olt}$} (the orthogonal complement of $\nabla \mathscr{I}_{\olt}$ {in $\nabla \harmo \ol$}). Assume that $\nabla h$ is in $\harmo\ol \ominus \mathscr{I}_{\olt}$ and denote its trace by $b = \tr_{\bcheese}(h)$. {Equivalently,}
	\begin{align}
		0	= \left\langle\nabla \varphi, \nabla h\right\rangle_{\ltd \cheese}
		= \left(\pd{\varphi}{\eta}, b \right)
		= \left({\s_{\bcheese}}^{-1}\left(f\right),b\right)
		= \left(f , {\s_{\bcheese}}^{-1}\left(b\right)\right) \label{eq:orthogonality-relation}
	\end{align}
	holds for every $\varphi \in \mathscr{I}_{\olt}$ and thus for each $f\in\sobs{\half}{\bol\setminus\bolm}$ {satisfying} \eqref{eq:condition_for_null}. %
	{It is} evident that \eqref{eq:orthogonality-relation} holds if and only if $b$ is of the form
	\begin{align}
		b=\s_{\bcheese}(k) + c 
		&& 
		\left( \text{for some }k\in\sobs{-\half}{\bolt} \text{ and some } \ c \in \RR_{\bol}\right).
		\label{eq:condition-on-b} 
	\end{align} 
	{As $\RR_{\bol}$ is the space of traces} of functions in $\RR_{\ol}${, we deduce that $h$} is of the form
	\begin{align}
		h = \s_{\cheese\ot\bcheese}\left(k\right) + \chi, && \left(\text{for some }k\in\sobs{-\half}{\bolt} \text{ and some } \ \chi \in \RR_{\ol}\right).
		\label{eq:harmonic-orthogonal}
	\end{align}
	We can {rewrite} the first summand as
	$	\s_{\ol\ot\bol}\left(k\right) 
	= \s_{\ol\ot\bolm}\left(k\right)
	= \left. \s_{\olt^{\outside}\ot\bolm}\left(k\right)\right\vert_{\ol},$
	{because $k$ is supported on $\bolm$} and
	{$\ol$ is} contained in $\olt^{\outside}$. Thus the first term in \eqref{eq:harmonic-orthogonal} is an element of $\harm {\olt^{\outside}} \vert_{\ol}$. After taking the gradient, the second term in \eqref{eq:harmonic-orthogonal} vanishes, which proves that a harmonic gradient in $\nabla \harmo \ol \ominus \nabla \mathscr{I}_{\olt}$ can be harmonically extended to the entire $\olt^{\outside}$.
	The converse statement {holds since every function $\varphi\in\harmo {\olt^{\outside}}$ is of the form $\varphi=c+\s_{\olt^{\outside}\ot\bolm}\left(k\right)$ for some $k\in\sobs{-\half}{\bolm}$ and some constant $c$, by the discussion in Section \ref{DLPS}}. Because the support of $k$ and the support of $f$ in \eqref{eq:orthogonality-relation} are disjoint,
	{$\nabla\varphi$ is trivially orthogonal to $\nabla \mathscr{I}_{\olt}$. Thus,} we obtain the desired condition
	\begin{align}
		\nabla h \in \left.\nabla\harmo{\olt^{\outside}}\right\vert_{\cheese} 
		&&\Longleftrightarrow &&
		\nabla h \in \nabla \harm {\ol} \ominus \nabla \mathscr{I}_{\olt}.
	\end{align}
	This, together with the Helmholtz-Hodge decomposition,
	{completes the proof.}
\end{proof}

{ Theorem \ref{thm:null_spaces} still holds when $\Omega$ is unbounded.  The bounded case, however, is enough to warrant
	most applications.}
\begin{expl}
	In geo-sciences, to model the lithosphere of the Earth, we would choose $\Omega$ to be a Lipschitz cheese with a single cavity (region C in Figure \ref{fig:graincheese}), and $\olt$ to be the exterior of the Earth. This reflects the typical situation where one knows the magnetic potential only outside the Earth (for example on a satellite orbit). According to the previous theorem, {that} part of the lithosphere magnetization that can be recovered uniquely from the magnetic potential { consists of its projection
		onto harmonic gradients} in the lithosphere that can be continued {harmonically throughout} the entire Earth. {This projection
		is in fact the magnetization with minimum $L^2(\Omega;\mathbb{R}^d)$-norm
		producing the observed field.}
\end{expl}
{For $\Omega$ a bounded Lipschitz open set,
	both $\nabla \sobo \cheese$ and $\sobdivo \cheese$ consist of integrable vector fields with zero mean on $\Omega$. Therefore, we immediately deduce from
	Theorem \ref{thm:null_spaces} the following fact:}
\begin{cor}\label{cor:mean}
	{Let $\cheese$ be a bounded Lipschitz open set, and  $\funf\in L^2(\Omega;\mathbb{R}^d)$ a vector field which is invisible
		in $\Omega^{\outside}$. Then $\int_\Omega\funf(y)d\mu(y)=0$.}
\end{cor}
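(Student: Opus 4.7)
The plan is to specialize Theorem \ref{thm:null_spaces} to the case $\olt=\Omega^{\outside}$, the full exterior. With this choice, $\partial\Omega\setminus\partial\olt=\emptyset$, so the parameter space for $\mathscr{I}_{\olt}$ shrinks to $\{f=0\}$ and consequently $\mathscr{I}_{\Omega^{\outside}}=\{0\}$. The theorem then identifies
\[
\nulls\bigl[\p_{\Omega^{\outside}\ot\Omega}\bigr]=\nabla\sobo{\Omega}\oplus\sobdivo{\Omega},
\]
so every invisible $\funf$ decomposes as $\funf=\nabla g+\funk$ with $g\in H^1_0(\Omega)$ and $\funk\in\sobdivo{\Omega}$. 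Since $\Omega$ is bounded we have $1\in L^2(\Omega)$, and therefore $\funf\mapsto\int_\Omega\funf\,d\mu$ is a continuous linear functional on $\ltd{\Omega}$; it thus suffices to verify vanishing on each summand by approximation.

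For the gradient part, pick $g_n\in C^\infty_0(\Omega)$ with $g_n\to g$ in the norm $\|\nabla\cdot\|_{\ltd{\Omega}}$. Each component of $\nabla g_n$ integrates to zero by the divergence theorem (the boundary term vanishes because $g_n$ is compactly supported), and continuity gives $\int_\Omega\nabla g\,d\mu=0$. For the divergence-free part, approximate $\funk$ in $\ltd{\Omega}$ by $\funk_n\in C^\infty_0(\Omega;\Rd)$ with $\divv{\funk_n}=0$. Writing $(\funk_n)_i=\nabla x_i\cdot\funk_n$ and integrating by parts,
\[
\int_\Omega(\funk_n)_i\,d\mu=\int_\Omega\nabla x_i\cdot\funk_n\,d\mu=-\int_\Omega x_i\,\divv{\funk_n}\,d\mu=0,
\]
the boundary contribution vanishing since $\funk_n$ has compact support in $\Omega$. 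Passage to the $\ltd{\Omega}$ limit yields $\int_\Omega\funk\,d\mu=0$, and summing the two contributions gives the claim.

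I do not anticipate a real obstacle here: the only bookkeeping point is recognising that taking $\olt$ to be the entire exterior collapses the $\mathscr{I}_{\olt}$ component of the null space, after which the result follows from classical integration-by-parts arguments extended to $L^2$ by density.
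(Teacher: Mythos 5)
Your proof is correct and follows essentially the same route as the paper: specialize Theorem \ref{thm:null_spaces} to $\olt=\Omega^{\outside}$ (where $\mathscr{I}_{\olt}$ is trivial) and observe that both $\nabla\sobo{\Omega}$ and $\sobdivo{\Omega}$ consist of zero-mean fields. The paper merely asserts this last fact, whereas you supply the density and integration-by-parts details; no substantive difference.
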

{We note that Corollary \ref{cor:mean} also follows from \cite[Lem. 2.8]{barvilhar19}.} The upcoming corollary connects our {work} to the main result in \cite{fabian19}{,} as well as {to} \cite[Lem. 2.4]{barvilhar19}. {Roughly speaking,} it says that a {finite} collection of  magnetized grains {that do not touch} is invisible if and only if each grain {is itself} already invisible. 

\begin{cor}\label{cor:cluster}
	Let $\cheese$ be a Lipschitz cluster in $\Rd$ ($d\geq 3$) with $N$ connected components $\os_{1},\ldots,\omega_N$. Then 
	\begin{align}\label{corcc}
		\nulls	\left[	\p_{	\cheese^{\outside} \ot \cheese	\vpos	}	\right] 
		= \bigoplus_{i=1}^{N} \left(	\nabla \sobo{\os_{i}} + \sobdivo {\os_{i}}	\vpsum	\right),
	\end{align}
\end{cor}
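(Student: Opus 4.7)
The plan is to apply Theorem \ref{thm:null_spaces} with $\olt = \cheese^{\outside}$, and then decompose the resulting null space along the connected components of $\cheese$.

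First, I would verify that $\cheese^{\outside}$ is itself a single connected component of the complement of $\cheese$, so that this choice of $\olt$ is admissible. Each $\os_i$ is a bounded simply connected Lipschitz domain, and in dimension $d\geq 3$ the complement of such a set is connected; removing finitely many pairwise disjoint bounded sets in $\Rd$ preserves connectedness. Thus Theorem \ref{thm:null_spaces} applies with $\olt = \cheese^{\outside}$.

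Next, I would show that $\mathscr{I}_{\cheese^{\outside}} = \{0\}$. For an open set with Lipschitz boundary one has $\bcheese = \partial(\cheese^{\outside})$, so the boundary piece $\bcheese \setminus \partial(\cheese^{\outside})$ that parametrizes $\mathscr{I}_{\olt}$ via \eqref{eq:form-of-null-harmonics} is empty. Consequently the theorem reduces to
\begin{equation*}
\nulls\left[\p_{\cheese^{\outside} \ot \cheese\vpsum}\right] = \nabla \sobo{\cheese} \oplus \sobdivo{\cheese}.
\end{equation*}

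Finally, since the components $\os_i$ are pairwise positively separated, any $\varphi \in C_{0}^{\infty}(\cheese)$ splits uniquely into smooth compactly supported pieces on each $\os_i$, and the same holds for divergence-free test vector fields. Closing in the natural norms produces
\begin{equation*}
\sobo{\cheese} = \bigoplus_{i=1}^{N} \sobo{\os_i}, \qquad \sobdivo{\cheese} = \bigoplus_{i=1}^{N} \sobdivo{\os_i},
\end{equation*}
where each summand is tacitly extended by zero on $\cheese \setminus \os_i$. Contributions associated with different components are automatically $\ltd{\cheese}$-orthogonal because their supports are disjoint, so combining the two decompositions delivers \eqref{corcc}.

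The only even mildly delicate point is the vanishing of $\mathscr{I}_{\cheese^{\outside}}$, which boils down to the elementary identity $\bcheese = \partial(\cheese^{\outside})$; the remaining component-wise splitting is routine bookkeeping of the Helmholtz--Hodge decomposition over a disjoint union of Lipschitz domains.
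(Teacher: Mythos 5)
Your argument is correct in substance and follows essentially the same route as the paper: take $\olt$ to be all of $\cheese^{\outside}$ in Theorem \ref{thm:null_spaces}, note that $\mathscr{I}_{\olt}$ is trivial because $\bcheese\setminus\partial\olt=\emptyset$, and then split $\nabla\sobo{\cheese}\oplus\sobdivo{\cheese}$ component-wise using the positive separation of the $\os_{i}$.

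One correction to your first step, though: it is both unnecessary and, as justified, false. The theorem admits an \emph{arbitrary collection} of connected components of $\cheese^{\outside}$ as $\olt$, so choosing $\olt=\cheese^{\outside}$ (i.e., all of them) requires no connectedness of the complement. Moreover, the claim that a bounded, simply connected Lipschitz domain in $\Rd$, $d\geq3$, has connected complement is wrong: the spherical shell $\{x\in\RR^{3}:1<|x|<2\}$ is bounded, simply connected, and Lipschitz, yet its complement has two components, so a Lipschitz cluster may well have a disconnected exterior. Since this claim plays no role in the remainder of your argument, the proof stands once that step is simply deleted.
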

\begin{proof}
	{Observe that $\mathscr{I}_{\ol^{\outside}}$ is trivial.
		Indeed, from \eqref{eq:form-of-null-harmonics},} we know that $\mathscr{I}_{\ol^{\outside}}$ is parametrized by functions $f\in\sobs{\half}{\bol\setminus\bol^{\outside}}${,} and since $\bol^{\outside} = \bol$ it follows that $\mathscr{I}_{\ol^{\outside}}=\{ 0 \}$. {Thus, by Theorem \ref{thm:null_spaces}, we need only show that $\nabla \sobo \cheese \oplus \sobdivo \cheese$ splits further into the direct sum indicated by \eqref{corcc}.
		This is} immediate, since $\ol$ is a union of {positively separated Lipschitz open} sets.
\end{proof} 
{It is worth pointing out that, as soon as the connected components of an open set $\Omega$ are at positive 	distance from each other, a field supported on $\Omega$ is invisible if and only if its restriction to each component is invisible, no matter if $\Omega$ is 	Lipschitz or not \cite[Lem. 2.4]{barvilhar19}. 	Note also that the result no longer holds if the connected components do touch, even when each component is Lipschitz,  cf.  Figure \ref{fig:divfree}.}

In geophysics, {magnetizations are often} considered to be induced by some strong ambient magnetic field. If the sample under consideration is small, then {one can assume that the magnetization is constant in direction throughout the sample}, and proportional {in magnitude} to the material's susceptibility{; we call such a sample {\em unidimensional}}. {And as  rock materials mingle during subsequent history, it seems fairly reasonable to assume that a rock sample 	consists of a concatenation of unidimensional samples.} The next corollary shows that, if the magnetization in each grain of a cluster {is unidimensional}, then the directions in each are determined uniquely or the susceptibilities have zero mean (i.e., the magnetization in each grain has a vanishing net moment).  

\begin{cor}\label{cor:unidirect}
	Let $\ol$ be a Lipschitz cluster in $\Rd$ ($d\geq 3$) with $N$ connected components $\os_{1},\ldots,\os_{N}$. For $i\in\{1,\dots,N\}$ let $\vect_i,\mathbf{w}_i$ denote unit vectors in $\Rd$ and $f_i,g_i$ be functions in  $L^2(\os_i)$. Further, let $\funf$ and $\fung$ be vector fields of the form
	\begin{align}
		\funf=\sum_{i=1}^N\vect_i f_i\ \chi_{\omega_i},
		&&
		\fung=\sum_{i=1}^N\mathbf{w}_i g_i\ \chi_{\omega_i}.
		\label{eq:unidirectional}
	\end{align}
	If $\funf$ and $\fung$ generate the same magnetic potential, i.e., $\p_{\cheese^{\outside}\ot\cheese}(\mathbf{f})=\p_{\cheese^{\outside}\ot\cheese}(\mathbf{g})$, then on each $\os_{i}$  it holds $\left\langle  f_{i}, 1 \right\rangle_{\lt {\os_{i}}} =  \left\langle  g_{i} , 1 \right\rangle_{\lt {\os_{i}}}$ and
	\begin{align}
		\text{either} \quad \left\langle  f_{i}, 1 \right\rangle_{\lt {\os_{i}}} = 0
		&& \text{or} \quad \vect_{i} = \mathbf{w}_{i}.
	\end{align} 
\end{cor}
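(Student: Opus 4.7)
The plan is to reduce the claim to the grain-by-grain vanishing of the mean of the difference field. Let $\mathbf{h}:=\funf-\fung\in\ltd\ol$. Since $\funf$ and $\fung$ yield identical magnetic potentials in $\ol^{\outside}$, $\mathbf{h}$ lies in $\nulls[\p_{\ol^{\outside}\ot\ol}]$. By Corollary \ref{cor:cluster} this null-space is the direct sum over the components $\os_i$ of $\nabla\sobo{\os_i}+\sobdivo{\os_i}$, so each restriction $\mathbf{h}|_{\os_i}$ belongs to $\nabla\sobo{\os_i}+\sobdivo{\os_i}$ and is in particular invisible in $\os_i^{\outside}$.

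Next I would apply Corollary \ref{cor:mean} to each bounded Lipschitz open set $\os_i$ to conclude $\int_{\os_i}\mathbf{h}(y)\measure{y}=0$. Substituting the unidirectional form \eqref{eq:unidirectional} yields the vector equation
\begin{align*}
\vect_i\,\alpha_i=\mathbf{w}_i\,\beta_i,\qquad \alpha_i:=\langle f_i,1\rangle_{\lt{\os_i}},\quad \beta_i:=\langle g_i,1\rangle_{\lt{\os_i}}.
\end{align*}

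I would then finish by a short case distinction. Taking Euclidean norms gives $|\alpha_i|=|\beta_i|$. If $\alpha_i=0$, the right-hand side vanishes and $\mathbf{w}_i\neq 0$ forces $\beta_i=0$, so the first alternative of the disjunction holds and $\alpha_i=\beta_i$. Otherwise $\vect_i$ and $\mathbf{w}_i$ are unit vectors collinear with the common nonzero vector $\vect_i\alpha_i=\mathbf{w}_i\beta_i$; with the natural sign convention that resolves the intrinsic ambiguity $(\vect_i,f_i)\leftrightarrow(-\vect_i,-f_i)$ in the unidirectional representation, we obtain $\vect_i=\mathbf{w}_i$ and hence $\alpha_i=\beta_i$. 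The only mildly delicate point is this last sign normalization; every remaining step is a direct consequence of the componentwise null-space decomposition given by Corollary \ref{cor:cluster} and the mean-zero statement of Corollary \ref{cor:mean}.
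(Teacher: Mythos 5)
Your proof follows the paper's argument exactly: Corollary \ref{cor:cluster} reduces the hypothesis to grain-wise invisibility of $\funf-\fung$, Corollary \ref{cor:mean} then yields the moment identity $\vect_i\left\langle f_i,1\right\rangle_{\lt{\os_i}}=\mathbf{w}_i\left\langle g_i,1\right\rangle_{\lt{\os_i}}$, and the conclusion follows by comparing norms. Your explicit flagging of the sign ambiguity $(\vect_i,f_i)\leftrightarrow(-\vect_i,-f_i)$ is a fair observation that the paper's \enquote{the result follows at once} glosses over: strictly speaking, without such a normalization the moment identity also admits $\mathbf{w}_i=-\vect_i$ with $\left\langle g_i,1\right\rangle_{\lt{\os_i}}=-\left\langle f_i,1\right\rangle_{\lt{\os_i}}$, so your added convention is needed to obtain the statement as literally written.
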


\begin{proof}
	The assumption $\p_{\cheese^{\outside}\ot\cheese}(\mathbf{f})=\p_{\cheese^{\outside}\ot\cheese}(\mathbf{g})$ implies $\funf-\fung\in \nulls(\p_{\cheese^{\outside}\ot\cheese})$. {Thus, by Corollary \ref{cor:cluster},
		$(\vect_i f_i-\-\mathbf{w}_i g_i)\ \chi_{\omega_i}$ is invisible in $\Omega^{\outside}$  and therefore, by Corollary \ref{cor:mean},
		$\vect_{i} \left\langle f_{i} , 1 \right\rangle_{\lt {\os_{i}}} = \mathbf{w}_{i} \left\langle g_{i} , 1 \right\rangle_{\lt {\os_{i}}}$. From this, the result follows at once.}
\end{proof}

{Still} within the context of geo-science{s}, Theorem \ref{thm:null_spaces} implies that {one} cannot recover the full magnetization of the lithosphere knowing only its magnetic potential
{in the exterior of}  the Earth. But it leaves open {the question whether or not one can} estimate the depth of the magnetized lithosphere to some extent from the magnetic data. The next corollary shows that  even this is {not} possible without further assumptions. 

\begin{cor}\label{cor:suppnull}
	Let $\ol$ be a bounded, connected and simply connected Lipschitz domain, and
	{$\olm$} a Lipschitz cluster contained in $\ol${;} i.e. $\overline{\olm}\subset\ol$. Then{,} for every vector field $\funf\in\ltd \ol${,} there exists a vector field $\fung\in
	L^2(\Omega\setminus{\overline{\olm}};\mathbb{R}^d)$ such that
	\begin{align*}
		\p_{\ol^{\outside}\ot\ol}\left(\funf\right) 
		= \p_{\ol^{\outside}\ot\ol\setminus{\overline{\olm}}}\left(\fung\right).
	\end{align*}
\end{cor}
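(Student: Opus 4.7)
The strategy is to find a vector field $\mathbf{n}$ on $\ol$ which is invisible in $\ol^{\outside}$ and satisfies $\mathbf{n}|_{\olm}=\funf|_{\olm}$. Once such an $\mathbf{n}$ is at hand, the field $\fung:=(\funf-\mathbf{n})|_{\ol\setminus\overline{\olm}}$ lies in $\ltd{\ol\setminus\overline{\olm}}$, its zero-extension to $\ol$ coincides almost everywhere with $\funf-\mathbf{n}$, and the invisibility of $\mathbf{n}$ in $\ol^{\outside}$ yields
\[
\p_{\ol^{\outside}\ot\ol}(\funf)
=\p_{\ol^{\outside}\ot\ol}(\funf-\mathbf{n})
=\p_{\ol^{\outside}\ot\ol\setminus\overline{\olm}}(\fung).
\]

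To identify the relevant invisible fields, I would apply Theorem \ref{thm:null_spaces} to $\ol$ with $\olt:=\ol^{\outside}$. Because $\bol\setminus\partial(\ol^{\outside})=\emptyset$, the parameter space for $\mathscr{I}_{\ol^{\outside}}$ is empty, so
\[
\nulls\!\left[\p_{\ol^{\outside}\ot\ol}\right]=\nabla\sobo\ol\oplus\sobdivo\ol.
\]
Thus the problem reduces to realizing $\funf|_{\olm}$ as the restriction to $\olm$ of some field in $\nabla\sobo\ol\oplus\sobdivo\ol$.

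For this, I would apply the Helmholtz decomposition \eqref{eqn:h2} on the bounded Lipschitz open set $\olm$ (on which $\sobloc{1}{\olm}=\sob{\olm}$) to split $\funf|_{\olm}=\nabla\phi+\mathbf{u}$ with $\phi\in\sob{\olm}$ and $\mathbf{u}\in\sobdivo{\olm}$. Extending $\mathbf{u}$ by zero to $\ol$ gives $\widetilde{\mathbf{u}}\in\sobdivo\ol$: if $\mathbf{u}_n\in C^\infty_0(\olm;\Rd)$ are divergence-free and approximate $\mathbf{u}$ in $\ltd{\olm}$, their zero-extensions lie in $C^\infty_0(\ol;\Rd)$, remain divergence-free, and approximate $\widetilde{\mathbf{u}}$ in $\ltd\ol$. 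To extend $\phi$, I would solve the Dirichlet problem in $\sob{\ol\setminus\overline{\olm}}$ with boundary values $\tr_{\bolm}(\phi)\in\sobs{\half}{\bolm}$ on $\bolm$ and $0$ on $\bol$, and glue the resulting function with $\phi$ on $\olm$; the traces on $\bolm$ match by construction and the trace on $\bol$ vanishes, so the glued function $\widetilde{\phi}$ belongs to $\sobo\ol$.

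Setting $\mathbf{n}:=\nabla\widetilde{\phi}+\widetilde{\mathbf{u}}$ then supplies an element of $\nabla\sobo\ol\oplus\sobdivo\ol$ that coincides with $\funf$ on $\olm$, and the definition of $\fung$ above completes the argument. The only mildly technical step is the construction of $\widetilde{\phi}$, which however is a routine application of well-posedness for the Dirichlet problem on the Lipschitz cheese $\ol\setminus\overline{\olm}$; everything else is a direct consequence of Theorem \ref{thm:null_spaces} combined with the Helmholtz decomposition.
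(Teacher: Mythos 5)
Your proof is correct, but it follows a genuinely different construction from the paper's. Both arguments boil down to subtracting from $\funf$ an invisible field that agrees with $\funf$ on $\olm$, and both invoke Theorem \ref{thm:null_spaces} to know that $\nulls[\p_{\ol^{\outside}\ot\ol}]=\nabla\sobo\ol\oplus\sobdivo\ol$; but the paper builds that invisible field differently. It first discards the components of $\funf$ in $\nabla\sobo\ol\oplus\sobdivo\ol$ to reduce to the case $\funf=\nabla\varphi$ with $\varphi\in\harmo\ol$, then picks a smooth cutoff $\eta$ equal to $1$ on $\overline{\olm}$ and supported in a slightly larger cluster compactly contained in $\ol$, and writes $\nabla\varphi=\nabla(\eta\varphi)+\nabla\bigl((1-\eta)\varphi\bigr)$: the first summand lies in $\nabla\sobo\ol$, hence is invisible, and the second is supported in $\ol\setminus\olm$, so it serves directly as $\fung$. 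You instead perform a Helmholtz decomposition of $\funf|_{\olm}$ on $\olm$ itself and extend each piece to an invisible field on $\ol$ --- the divergence-free part by zero-extension (which indeed stays in $\sobdivo\ol$) and the gradient part by solving a Dirichlet problem on the cheese $\ol\setminus\overline{\olm}$ and gluing across $\bolm$. Your route is sound: the zero-extension argument is clean, the extended boundary datum is in $H^{1/2}$ of the full boundary by the paper's convention on components, and the glued function has vanishing trace on $\bol$, hence lies in $\sobo\ol$ since $\ol$ is bounded. The trade-off is that your argument leans on the well-posedness of the Dirichlet problem and the standard $H^1$ patching lemma across a Lipschitz interface, whereas the paper's cutoff trick needs neither; on the other hand, your construction works with $\funf$ directly and never has to isolate its harmonic-gradient component.
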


\begin{proof}
	If $P_{\ol^{\outside}\ot\ol}(\funf)=0$ then $\fung=0$ satisfies the condition. Hence, by Theorem \ref{thm:null_spaces} it suffices to consider vector fields of the form $\funf=\nabla \varphi$ with $\varphi\in\harm \ol$.
	
	Let $\Gamma$ be a Lipschitz cluster such that each connected component of $\Gamma$ is an open neighborhood of exactly one connected component of $\olm$. Such a cluster exists because the connected {components of the Lipschitz open set $\olm$ are positively separated}.
	Let $\eta$ denote a smooth cut-off function {equal to $1$} on $\overline{\olm}$ and {to $0$} on $\Gamma^{\outside}$. The function $\eta\varphi$ is then in $\sobo \ol$, and by Theorem \ref{thm:null_spaces} its gradient is invisible on $\ol^{\outside}$. Moreover, the support of $\nabla (\varphi - \eta\varphi)$ is contained within $\ol\setminus\olm$ which implies{, since $\partial\olm$ has $\mu$-measure zero, that}
	\begin{align*}
		\p_{\ol^{\outside}\ot\ol} \left(	\nabla \varphi	\right) 
		= \p_{\ol^{\outside}\ot\ol} \left(	\nabla (\varphi - \eta\varphi) 	\right)
		= \p_{\ol^{\outside}\ot\ol\setminus\olm} \left(	\nabla (\varphi - \eta\varphi)	\right)\\=\p_{\ol^{\outside}\ot\ol\setminus{\overline{\olm}}} \left(	\nabla (\varphi - \eta\varphi)	\right).
	\end{align*}
	Setting $\fung = \nabla (\varphi - \eta\varphi)$ completes the proof.
\end{proof}

\subsection{Stability estimate for harmonic gradients}\label{sec:stab} 

In geophysical applications{, one measures} the magnetic potential {in a limited region only}, say on a satellite orbit $\bol$ around the Earth {$\omega$, with $\omega \subset \ol$}. To reconstruct the magnetization of the lithosphere from {such} data, we need to continue the measured potential to the entire region $\os^{\outside}$ and then invert for the magnetization. It is well-known that this problem is ill-conditioned, {the main source of instability being} the harmonic continuation ({as indicated for instance} in \cite{isakov12}). Assuming that we already know the magnetic potential everywhere in $\os^{\outside}$, the inversion for the harmonic gradient within $\os$ {that produces this potential} is stable, as the following proposition shows. In fact, we actually consider stability with respect to the gradient of the magnetic potential, i.e., with respect to the magnetic field.
\begin{prop}\label{lem:stability}
	Let $\cheese$ be a bounded, connected and simply connected Lipschitz domain in $\Rd$ ($d\geq3$). There is a constant $C>0$ such that, for every $\mathbf{f}\in\nabla\harmo {\cheese}$,
	\begin{align}\label{inegstabc}
		\left\|\mathbf{f}\right\|_{L^2(\cheese,\Rd)}\leq C\left\|\nabla\p_{ \cheese^{\outside}\ot\cheese\vpsum}(\mathbf{f})\right\|_{{L^2(\cheese^{\outside};\mathbb{R}^d)}}.
	\end{align}	
	\label{eq:harmonic-estimate}
\end{prop}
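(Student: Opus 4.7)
The plan is to exploit the jump relations for the single layer potential across $\bol$ to rewrite the Dirichlet energy of $\mathbf{f}=\nabla h$ in $\cheese$ as a sum of two bilinear pairings involving the magnetic potential $u:=\p_{\cheese^{\outside}\ot\cheese}(\mathbf{f})$ evaluated on either side of $\bol$, and then to bound the result by $\|\nabla u\|_{L^2(\cheese^{\outside};\Rd)}$ using continuity of standard harmonic extension operators.

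First I would write $\mathbf{f}=\nabla h$ for some $h\in\harmo\cheese$, and observe that subtracting from $h$ its mean on $\cheese$ changes neither $\nabla h$ nor $u$ nor the pairing below (because $(\partial h/\partial\eta,1)=0$ for $h$ harmonic in a bounded domain), so I may assume $\|\tr_{\bol}h\|_{H^{1/2}(\bol)}\leq C_0\|\nabla h\|_{L^2(\cheese;\Rd)}$ by Poincar\'e--Wirtinger combined with the trace theorem. As in the proof of Theorem \ref{thm:null_spaces}, integration by parts gives $u=-\s_{\cheese^{\outside}\ot\bol}(\partial h/\partial\eta)$, and I would introduce its companion $w:=-\s_{\cheese\ot\bol}(\partial h/\partial\eta)$, harmonic in $\cheese$. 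Continuity of the trace of the single layer potential and the jump formula for its normal derivative (Section \ref{sec:aux}) yield $\tr_{\bol}w=\tr_{\bol}u$ and $\partial h/\partial\eta=\partial_\eta w-\partial_\eta u|_{\cheese^{\outside}}$ in $H^{-1/2}(\bol)$.

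Next I would expand the Dirichlet energy via \eqref{intpp}. Introducing the harmonic extension $\tilde h\in\harmo{\cheese^{\outside}}$ of $\tr_{\bol}h$ vanishing at infinity and integrating by parts on each side of $\bol$ (the sign flips in $\cheese^{\outside}$ because its outward normal is $-\eta$), I obtain
\[\|\nabla h\|^2_{L^2(\cheese;\Rd)}=\left(\pd{h}{\eta},\tr_{\bol}h\right)=\langle\nabla w,\nabla h\rangle_{L^2(\cheese;\Rd)}+\langle\nabla u,\nabla\tilde h\rangle_{L^2(\cheese^{\outside};\Rd)}.\]
Cauchy--Schwarz, together with continuity of the interior Dirichlet problem ($\|\nabla w\|\leq C_1\|\tr_{\bol}u\|_{H^{1/2}(\bol)}$) and of the exterior Dirichlet problem vanishing at infinity ($\|\nabla\tilde h\|\leq C_2\|\tr_{\bol}h\|_{H^{1/2}(\bol)}$), then reduces the estimate to bounding $\|\tr_{\bol}u\|_{H^{1/2}(\bol)}$ by $\|\nabla u\|_{L^2(\cheese^{\outside};\Rd)}$.

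The one nontrivial input, and the step I expect to be the main obstacle, is this reverse trace inequality
\[\|\tr_{\bol}u\|_{H^{1/2}(\bol)}\leq C_3\,\|\nabla u\|_{L^2(\cheese^{\outside};\Rd)},\qquad u\in\harmo{\cheese^{\outside}}.\]
This amounts to invertibility with bounded inverse of the exterior Dirichlet-to-Neumann map on $H^{1/2}(\bol)$, and relies essentially on $d\geq 3$ since the condition of vanishing at infinity (together with Sobolev embedding / decay of the Green's function) rules out the constants that would otherwise lie in its kernel. Assembling the pieces yields $\|\nabla h\|^2_{L^2(\cheese;\Rd)}\leq C\|\nabla u\|_{L^2(\cheese^{\outside};\Rd)}\|\nabla h\|_{L^2(\cheese;\Rd)}$, from which \eqref{inegstabc} follows at once upon dividing.
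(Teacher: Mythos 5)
Your overall architecture is valid, but as written the proposal has a gap exactly at its crux: the \enquote{reverse trace inequality} $\left\|\tr_{\bol}u\right\|_{\sobs{\half}{\bol}}\leq C_3\left\|\nabla u\right\|_{L^2(\cheese^{\outside};\Rd)}$ for $u\in\harmo{\cheese^{\outside}}$, which you flag as the main obstacle and then assert rather than prove, is precisely the non-trivial content of the paper's own proof. The paper argues in three steps: (i) the Neumann estimate $\|\nabla\varphi\|_{\ltd\cheese}\leq C\|\partial\varphi/\partial\nu\|_{\sobs{-\half}{\bol}}$ from \cite[thm. 9.2]{fabmenmit98}; (ii) the bounded invertibility of $\s_{\bol}\colon\sobs{-\half}{\bol}\to\sobs{\half}{\bol}$, which turns this into $C\|\s_{\bol}(\partial\varphi/\partial\nu)\|_{\sobs{\half}{\bol}}=C\|\tr_{\bol}u\|_{\sobs{\half}{\bol}}$; and (iii) your reverse trace inequality, established by introducing a shell $\Sigma=\mathbb{B}_R(0)\cap\cheese^{\outside}$ and combining the trace theorem on the bounded Lipschitz domain $\Sigma$ with a Poincar\'e-type inequality, the decay of $u$ at infinity being what eliminates the constants (this is where $d\geq3$ enters, as you correctly guess). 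Your replacement of (i)--(ii) by the splitting of the Dirichlet energy into an interior pairing with $w=\mp\s_{\cheese\ot\bcheese}(\partial h/\partial\eta)$ and an exterior pairing with the harmonic extension $\tilde h$, followed by Cauchy--Schwarz and well-posedness of the two Dirichlet problems, is correct (the sign conventions you adopt differ from \eqref{eq:pot_as_single} but wash out under the absolute values), yet it is a longer route to the same bottleneck, since it still reduces everything to step (iii).

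To close the gap you have at least two options consistent with the toolbox of Section \ref{sec:aux}. Either reproduce the paper's shell argument: $u$ decays like $O(|x|^{1-d})$ and $\nabla u\in L^2(\cheese^{\outside};\Rd)$ by the mapping properties of $\s_{\cheese^{\outside}\ot\bcheese}$, so $\|\tr_{\bol}u\|_{\sobs{\half}{\bol}}\leq C\|u\|_{\sob{\Sigma}}\leq C\|\nabla u\|_{L^2(\cheese^{\outside};\Rd)}$, where the last step uses that a harmonic function on $\cheese^{\outside}$ with small gradient and vanishing at infinity must itself be small on $\Sigma$. Or argue softly: the exterior Dirichlet solution operator $g\mapsto\nabla u_g$ is a bounded linear bijection from $\sobs{\half}{\bol}$ onto the closed subspace $\nabla\harmo{\cheese^{\outside}}$ of $L^2(\cheese^{\outside};\Rd)$ (injectivity because $\nabla u=0$ and decay at infinity force $u=0$; closedness of the range is recorded in Section \ref{sec:aux}), so the open mapping theorem gives the desired bound. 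Your appeal to bounded invertibility of the exterior Dirichlet-to-Neumann map would also work --- it factors as $\pm(\frac{1}{2}I+\dlay^{\star})\s_{\bol}^{-1}$ with both factors invertible by \cite[thm. 8.1]{fabmenmit98} --- but you would then need to cite or prove that fact, which is no cheaper than either alternative above.
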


\begin{proof}
	In {what follows,} $C$ will denote a positive
	{constant that depends on $\Omega$ and whose value may be different each time it
		occurs. When $\varphi\in\harmo \ol$, it is a classical estimate for the Neumann problem on a bounded domain \cite[thm. 9.2]{fabmenmit98} that}
	\begin{align}
		\| \nabla \varphi \|_{\ltd \ol}
		\leq
		C \left\| \pd{\varphi}{\nu} \right\|_{\sobs{-\half}{\bol}}.
		\label{eq:third-estimate}
	\end{align}
	{Also, by partial integration, we get with $\funf = \nabla \varphi$ that
		$\p_{ \ol^{\outside}\ot\ol\vpsum}(\mathbf{f})=\s_{\cheese^{\outside}\ot\bcheese}\left(\pd{\varphi}{\nu}\right)$. Let $R>0$ be large enough that $\mathbb{B}_R(0)\supset\overline{\Omega}$, and put $\Sigma:=\mathbb{B}_R(0)\cap\Omega^c$ (a shell-shaped domain).
		Clearly, 
		$|\p_{ \ol^{\outside}\ot\ol\vpsum}(\mathbf{f})(x)|=O(1/|x|^2)$ for $|x|$ large,
		hence $\s_{\cheese^{\outside}\ot\bcheese}\left(\pd{\varphi}{\nu}\right)
		\in \harmo {\ol^{\outside}}$ because its gradient is square summable in a neighborhood of $\infty$, as well as  in $\Sigma$ by an estimate analogous to \eqref{eq:third-estimate}. Adding a constant to $\varphi$ if necessary, we may assume it has zero mean on $\Sigma$ as this does not affect $\nabla\varphi$. In view of the continuity of the trace operator in the bounded Lipschitz domain $\Sigma$ and  the Poincar\'e inequality, we get a fortiori} that 
	\begin{align*}
		\left\|\nabla\s_{\cheese^{\outside}\ot\bcheese}\left(\pd{\varphi}{\nu}\right)
		\right\|_{L^2(\Omega^c;\mathbb{R}^d)} 
		 \geq
		C \left\|
		\s_{\bol}\left(\pd{\varphi}{\nu}\right)
		\right\|_{H^{\frac{1}{2}}(\bol)}
		\mkern-14mu.
	\end{align*}
	{Altogether, since $S_{\partial\Omega}:H^{-1/2}(\partial\Omega)\to S^{1/2}(\partial\Omega)$ is invertible, we obtain:}
	\begin{align*}
		&\left\|{\nabla}
		\p_{ \ol^{\outside}\ot\ol\vpsum}(\mathbf{f})
		\right\|_{{L^2(\Omega^c;\mathbb{R}^d)}}=
		\left\|{\nabla}
		\s_{\cheese^{\outside}\ot\bcheese}\left(\pd{\varphi}{\nu}\right)
		\right\|_{{L^2(\Omega^c;\mathbb{R}^d)}} 
		\\&\geq
		C \left\|
		\s_{\bol}\left(\pd{\varphi}{\nu}\right)
		\right\|_{H^{\frac{1}{2}}(\bol)}
		\geq 
		C \left\|
		\pd{\varphi}{\nu}
		\right\|_{H^{-\half}(\bol)}.\label{eqn:lemest1}
	\end{align*}
	Combining this with \eqref{eq:third-estimate} concludes the proof.
\end{proof}

\begin{rem}
	{Of course, Proposition \ref{lem:stability} still holds (with
		a different constant but essentially the same proof) if we replace the $L^2(\Omega^c;\mathbb{R}^d)$-norm
		in the right hand side of \eqref{inegstabc} by the $L^2(A;\mathbb{R}^d)$-norm, where $A$ is a Lipschitz shell of the form $O\cap\Omega^c$ with $O$ a Lipschitz open set containing $\overline{\Omega}$. However}, we want to point out that the above proposition implicitly requires harmonic continuation. To see this, {consider} a harmonic gradient $\nabla \varphi$ on a small Lipschitz domain $\os$ that is strictly embedded in $\ol$. Knowing its magnetic potential on $\ol^{\outside}$, the proposition assures only that we can reliably reconstruct the harmonic part of the field $\nabla \varphi\wedge 0$ (the original field {extended by zero to the whole of} $\ol$), but not the field $\nabla \varphi$ itself. In fact, since $\os$ and $\ol^{\outside}$ are {positively separated,} the operator $\p_{\ol^{\outside}\ot\os}$ is compact and its spectrum {contains zero}. Hence, no stability result as in proposition \ref{lem:stability} can hold {on} an infinite dimensional subspace of $\ltd \os$; in particular, not {on} $\nabla \harmo \os$.  
\end{rem}

\section{Simple vector fields}\label{sec:unique}

In the previous section{, we showed} one can reconstruct the {``harmonic gradient part'' of a} magnetization, given its magnetic potential. In this section we define a different {class}
of vector fields {enjoying} similar uniqueness properties{, but having} a better geometrical interpretation.

\paragraph{Simple vector fields.}
We call a vector field \textit{simple} if {it is a simple
	function on $\mathbb{R}^d$. That is, $\funf$ is simple if its essential
	range is finite:}
\begin{equation}
	\label{decs}
	\funf = \sum_{i=1}^{N} \vect_{i} \ \chi_{\os_{i}}
\end{equation}
{ for some finite family of Borel sets $\os_{1},\dots,\os_{N}$ and of vectors $\vect_{1},\dots,\vect_{N}$ in $\mathbb{R}^d$.} We refer to the family $\os_{1},\dots,\os_{N}$ as to a \textit{representation} of $\funf$. We call $\funf$ \textit{box-simple} if it is simple and admits a representation that contains only {bounded rectangular parallelepipeds} {with edges}  parallel to the {axes} of the coordinate system. 
\vspace{0.25 cm}

Notice that {a} representation of a simple vector field is non-unique{: not only can we change $\omega_i$ by a set of measure
	zero, but we can also} partition the support of a simple field into {smaller} sets and appropriately adjust the family of vectors. Nevertheless, the concept of a representation {will be useful in what follows}.
{A nonzero simple field can be  invisible,}  as the following two examples show.

\begin{expl}\label{exp:triangle}
	Let $T_{1}$ denote a triangle in $\RR^{2}$ defined by the vertices at $(0,0), (0,1)$, and $(\half,\half)$. Let $T_{2}, T_{3}$, and $T_{4}$ denote the subsequent rotations of $T_{1}$ by $\pi /2$ around the point $(\half,\half)$. Let $P_{i}=T_{i} \times [0,1]$ denote  the corresponding prisms in $\RR^{3}$ and write $\{ e_{1},e_{2},e_{3}\}$ for the standard basis vectors. Consider the simple vector field 
	\begin{align}
		\funf = e_{1} \ \chi_{P_{1}} + e_{2} \ \chi_{P_{2}} - e_{1} \ \chi_{P_{3}} - e_{2} \ \chi_{P_{4}}.
		\label{eq:inv_tri}
	\end{align}
	Since this vector field is divergence-free and has a vanishing normal almost everywhere on the surface of the cube $Q = P_{1}\cup P_{2}\cup P_{3} \cup P_{4}$, it is invisible. An illustration is provided in Figure \ref{fig:divfree}.
\end{expl}

\begin{figure}[t]
	\centering
	\begin{subfigure}{0.35\textwidth}
		\centering
		\def\svgwidth{4.5cm}
		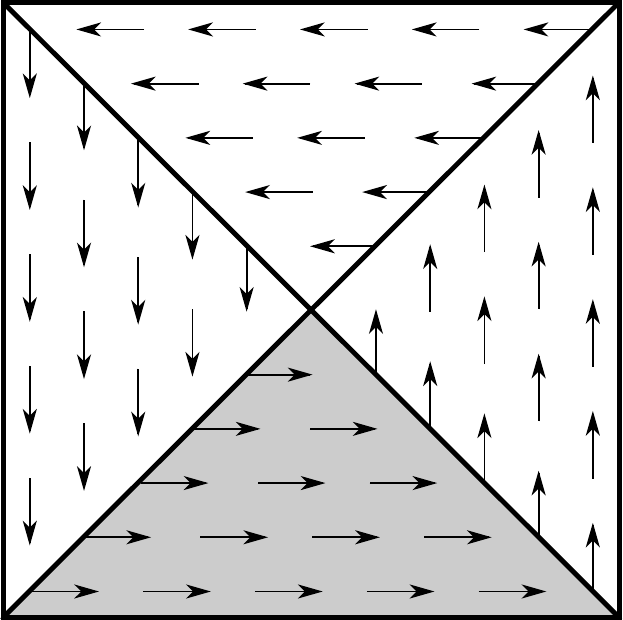
		\subcaption{Outline of the vector field in \eqref{eq:inv_tri}}
		\label{pic:a}
	\end{subfigure}\hspace{2cm}
	\begin{subfigure}{0.35\textwidth}
		\centering
		\def\svgwidth{4.5cm}
		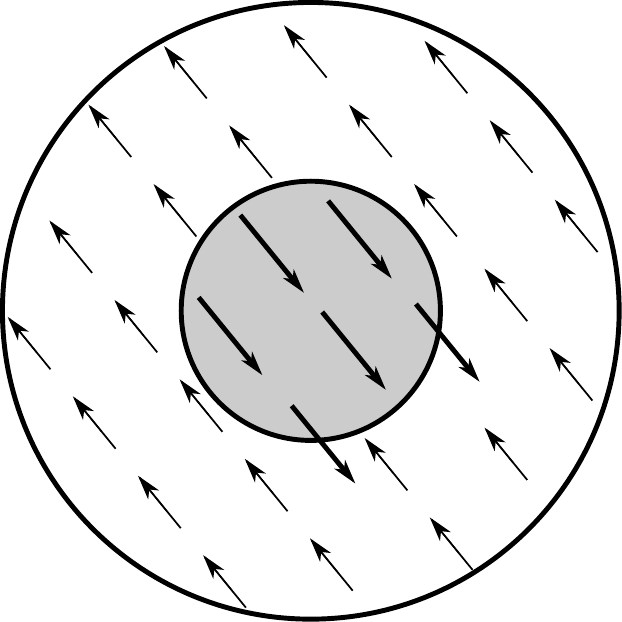
		\subcaption{Outline of the vector field in \eqref{eq:inv_ball}.}
		\label{pic:b}
	\end{subfigure}\hspace{0.6cm}
	\caption{Invisible simple-vector fields: The figure \ref{pic:a} shows the profile of the example \ref{exp:triangle}. The triangle $T_{1}$ is depicted gray. The figure \ref{pic:b} illustrates the example \ref{expl_ball}. The magnetization of the smaller ball (in gray) is by the factor $\alpha^{-1}$ larger than that of the larger ball.}
	\label{fig:divfree}
\end{figure}

\begin{expl}\label{expl_ball}
	For $0<\alpha<1$, $\vect\in\Rd$ and $\BB_r(0)\subset\Rd$,  a ball of radius $r>0$ centered at the origin, the field
	\begin{align}
		\funf = \vect \  \chi_{\BB_r(0)} - \frac{\vect}{\alpha^{d}} \chi_{\BB_{\alpha r}(0)}
		\label{eq:inv_ball}
	\end{align}
	is simple. It is invisible since
	\begin{align}
		\p_{\BB_r(0)^{\outside}\ot\BB_r(0)}\left( \vect \right) (x) 
		=  {\ \frac{ r^{d}}{(d-2)\| x \|^{d}}  \ \vect \cdot x}
		= \p_{\BB_r(0)^{\outside}\ot\BB_{\alpha r}(0)}\left( \frac{\vect}{\alpha^{d}} \right) (x).\label{eqn:dipole}
	\end{align}
	{This last example suggests} that the magnetic potential of simple fields {representable} on balls contains no information about the size of these balls. However, if we know a priori that the centers of the balls are pairwise disjoint,
	{then at least the locations of the centers} are determined uniquely. Since $\frac{ r^{d}}{(d-2)\| x \|^{d}}  \ \vect \cdot x$ is the field of a dipole with moment ${\bf v}\frac{r^d}{d-2}$ located at the origin, the latter is {equivalent} to the known statement that a collection of finitely many disjoint dipoles is determined uniquely by {its} magnetic potential.
\end{expl}

Simple fields suit the setup described, e.g., in \cite{degroot18}, where grains $\omega_i$ within a rock sample are identified via computed tomography (microCT), and subsequently magnetic field data from scanning superconducting quantum interference microscopy (SSM) measurements are inverted for a constant magnetization within each grain. {Even though the above examples show that nonzero} simple fields can be invisible,
in the next section we prove that this cannot happen for box-simple vector fields. {Therefore,} their magnetic potential uniquely determines their magnetization{, and a priori} localization of the grains via micoCT would thus (in theory) not be necessary for such fields.

\subsection{Uniqueness for box-simple vector fields}
The above examples show that {nonzero simple vector fields
	can be pure} elements of $\nabla\sobo \ol \oplus \sobdivo \ol$.
Quite surprisingly, though, this cannot happen for box-simple vector fields.
{Indeed}, the next theorem states that a {nonzero} box-simple field is always visible. {When $\funf$ is a nonzero box-simple field given by \eqref{decs}, we tacitly assume that the $\omega_i$ are nonempty open rectangular parallelepipeds and the $\vect_i$ are all nonzero, so that $\supp{\funf}=\bigcup_i\overline{\omega}_i$ .}

\begin{thm}\label{thm:simple-funtions}
	{If $\funf$ is} a nonzero box-simple vector field{,  then}
	\begin{align}
		\p_{\supp{\funf}^{\outside}\ot\supp{\funf}} \left(	\funf	\right) \neq 0.
	\end{align}
	If $\ol$ is a bounded, connected, and simply connected Lipschitz domain that contains $\supp{\funf}$, then the Helmholtz-Hodge decomposition of $\funf$ {in $\ol$} contains a non-vanishing harmonic gradient.
\end{thm}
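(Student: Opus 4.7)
The plan is to prove both statements at once via the following uniqueness claim: \emph{if $\funf$ is box-simple and $\p_{\Omega^c\ot\Omega}(\funf)\equiv 0$ on $\Omega^c$, then $\funf=0$}. To see that the theorem's statements follow, apply Theorem~\ref{thm:null_spaces} with $\olt=\Omega^c$: since $\partial\Omega=\partial(\Omega^c)$, the space $\mathscr{I}_{\Omega^c}$ is trivial, so $\nulls[\p_{\Omega^c\ot\Omega}]=\nabla\sobo\Omega\oplus\sobdivo\Omega$, which is precisely $(\nabla\harmo\Omega)^\perp$ in $\ltd\Omega$ by Lemma~\ref{lem:helmholtz_harmonic}. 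Hence the harmonic-gradient component of $\funf$ in $\Omega$ vanishes iff $\funf\in\nulls[\p_{\Omega^c\ot\Omega}]$, and the claim rules this out; the first statement then follows because $\Omega^c\subset\supp{\funf}^c$.

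To attack the uniqueness claim, note that $\p_{\Omega^c\ot\Omega}(\funf)\equiv 0$ is equivalent to $\int_\Omega\funf\cdot\nabla h\,d\mu=0$ for every $h\in\harmo\Omega$. I would test against the entire harmonic exponentials $h_\alpha(x)=e^{\alpha\cdot x}$ for $\alpha\in\mathbb{C}^d$ with $\alpha\cdot\alpha=0$; with $\funf=\sum_i\vect_i\chi_{\omega_i}$ and $\omega_i=\prod_k[a_k^i,b_k^i]$, the integrals factor into
\begin{align*}
F(\alpha)\;=\;\sum_i(\vect_i\cdot\alpha)\prod_k\frac{e^{\alpha_k b_k^i}-e^{\alpha_k a_k^i}}{\alpha_k}\;=\;0\qquad\text{for all }\alpha\in\mathcal{C}:=\{\alpha\cdot\alpha=0\}.
\end{align*}
Multiplying by $\prod_k\alpha_k$ and expanding each factor $e^{\alpha_k b_k^i}-e^{\alpha_k a_k^i}$ into the sum of contributions at the $2^d$ vertices of $\omega_i$ rewrites the relation as the vanishing on $\mathcal{C}$ of the entire ``exponential sum''
\begin{align*}
G(\alpha)\;=\;\sum_{v\in V}(\vect'(v)\cdot\alpha)\,e^{\alpha\cdot v},
\end{align*}
where $V$ is the finite set of all box vertices and $\vect'(v)$ is a specific $\pm$-signed sum of the $\vect_i$ over boxes having $v$ as a vertex.

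Next, I would extract $\vect'(v)=0$ at every $v$ by restricting $G$ to a ray $\alpha=t\alpha_0$ inside $\mathcal{C}$ chosen so that the scalars $\{\alpha_0\cdot v\}_{v\in V}$ are pairwise distinct (a generic choice on the cone). Then $G(t\alpha_0)=t\sum_v(\vect'(v)\cdot\alpha_0)e^{t\alpha_0\cdot v}\equiv 0$ in $t$, and linear independence of distinct exponentials forces $\vect'(v)\cdot\alpha_0=0$ for each $v$. Since $\mathcal{C}$ spans $\mathbb{C}^d$ for $d\geq3$ (e.g.\ via the isotropic vectors $(1,i,0,\dots,0),\,(1,0,i,0,\dots,0),\,(0,1,i,0,\dots,0),\dots$), varying $\alpha_0$ gives $\vect'(v)=0$ for every $v\in V$. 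But then $G\equiv 0$ on all of $\mathbb{C}^d$, hence $F\equiv 0$ on $\mathbb{C}^d$; specializing to $\alpha=-i\xi$, $\xi\in\mathbb{R}^d$, identifies this with the vanishing Fourier transform of $\divv\funf$, so $\divv\funf=0$ as a distribution.

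It remains to show that a box-simple, compactly supported, divergence-free vector field must be zero; for this I would refine the boxes to a common axis-aligned grid, write $\funf=\sum_J\vect_J\chi_{c_J}$, and induct on the multi-index $J$ in reverse lexicographic order. For the lex-maximal cell $c_{J_{\max}}$, all $d$ ``upper'' faces are external to $\supp{\funf}$, so div-freeness forces each $(\vect_{J_{\max}})_k=0$ and hence $\vect_{J_{\max}}=0$. For an arbitrary cell $c_J$ and direction $k$, the upper face in direction $e_k$ is either external or is shared with $c_{J+e_k}$, whose index is strictly larger in lex order and therefore carries $\vect=0$ by the inductive hypothesis; in either case the jump of the normal component forces $(\vect_J)_k=0$, and ranging over $k$ gives $\vect_J=0$. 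This completes the induction, yielding $\funf\equiv 0$ and contradicting $\funf\neq 0$. The hardest step is undoubtedly the passage ``$G\equiv 0$ on $\mathcal{C}\Rightarrow\vect'(v)=0$ for every $v$'': it rests on a delicate interplay between two features of the isotropic cone, namely the existence of rays on which the exponents $\alpha_0\cdot v$ are pairwise distinct (so that linear independence of exponentials applies) and the spanning of $\mathbb{C}^d$ by $\mathcal{C}$ (so that a linear form vanishing on $\mathcal{C}$ vanishes identically); both use $d\geq 3$ essentially.
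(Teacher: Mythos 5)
Your argument is correct, but it takes a genuinely different route from the paper's. The paper proceeds geometrically: it translates $\funf$ by a small step along $e_1$, notes that $\funf-\funf_T$ is still invisible, and uses harmonic continuation plus Liouville's theorem to show that the restriction of $\funf$ to a thin extremal slice is itself invisible; iterating over the $d$ coordinate directions reduces everything to a nonzero constant field on a small box at an extremal corner of $\supp{\funf}$, which contradicts Theorem~\ref{thm:null_spaces} since constants are harmonic gradients. You instead invoke Theorem~\ref{thm:null_spaces} at the outset to convert invisibility into orthogonality against the harmonic exponentials $e^{\alpha\cdot x}$ with $\alpha\cdot\alpha=0$ (one should note explicitly that the real and imaginary parts of these lie in $\harmo{\ol}$ for bounded $\ol$, so the theorem applies); the product structure of axis-parallel boxes turns this into the vanishing on the isotropic cone of an exponential sum over box vertices, and combining linear independence of exponentials along a generic ray of the cone with the fact that the cone spans $\mathbb{C}^d$ propagates the vanishing to all of $\mathbb{C}^d$, i.e., $\divv{\funf}=0$; a grid refinement and reverse-lexicographic corner induction then annihilates a compactly supported divergence-free box-simple field. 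The two delicate points you flag are real but correctly resolved: the bad set $\{\alpha_0:\alpha_0\cdot(v-w)=0 \text{ for some } v\neq w\}$ is a finite union of hyperplanes, none of which contains the irreducible quadric $\{\alpha\cdot\alpha=0\}$ for $d\geq3$, so a good ray exists and the surviving linear conditions force $\vect'(v)=0$. Both proofs ultimately rest on the same two features of boxes--an extremal-corner argument and axis-parallel rigidity--but they package them differently: the paper's proof is self-contained and purely real-variable, while yours isolates the analytic content in one clean statement (for box-simple fields, invisibility is equivalent to the vanishing of all normal jumps across faces, i.e., to $\divv{\funf}=0$ together with zero normal trace) and makes transparent why the counterexamples of Section~\ref{sec:unique} escape: the prism field of Example~\ref{exp:triangle} is divergence-free but not box-simple, so the corner induction fails, while the ball field of Example~\ref{expl_ball} has a Fourier transform vanishing on the cone without being an entire multiple of $\prod_k\alpha_k$ times a vertex sum.
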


\begin{rem}\label{rem:unitary-trafo}
	Before we prove this statement{, let us stress that rigid motions} preserve invisibility. More precisely, let $U\colon \Rd \to \Rd$ be {such an affine transformation}. If $A$ is a set in $\Rd$, we define $U(A)= \left\{	U x  : x \in A	\right\}$ and notice that $U(A^{\outside})=U(A)^{\outside}$. {Then,} $U$ induces a linear transformation $\mathbb{U}\colon \Rd \to \Rd$ on vectors in $\Rd$ such that a vector $\vect \coloneqq x-y \in \Rd $ is mapped to $ \mathbb{U} \vect \coloneqq Ux -Uy$. This $\mathbb{U}$ is orthogonal with respect to the Euclidean scalar product{,} and we denote its adjoint by {$\mathbb{U}^{\star}=\mathbb{U}^{-1}$}.
	
	{Now, if $\funf$ is} invisible on $\ol^{\outside}$, then the field $\funf_{U}$ defined as $\funf_{U}(x) = \mathbb{U}^{\star} (\funf (U x))$ { is invisible on $U^{-1}(\ol^{\outside})$,} since for every $x\in U^{-1}(\ol^{\outside})$ we have {that}
	\begin{align*}
		0 	= \p_{\ol^{\outside}\ot \ol} \left(	\funf	\right) (U x) 
		&= \frac{1}{C_{d}} \int_{\ol} \frac{Ux - y}{|Ux - y|^{d}} \cdot \funf(y) \measure{y} \\
		&= \frac{1}{C_{d}} \int_{U^{-1}(\ol)} \frac{x - y}{|x - y|^{d}} \cdot \funf_{U}(y) \measure{y} 
		= \p_{U^{-1}(\ol^{\outside})\ot U^{-1}(\ol)}(\funf_{U}) (x).
	\end{align*}
	Here, for the third equality we used the change of coordinates $y\mapsto Uy$ and the fact that $|\det(U)| =1$.
\end{rem}

\begin{proof}[Proof of Theorem \ref{thm:simple-funtions}]
	We will prove the statement by contradiction, assuming that $\funf$ is nonzero and invisible. %
	
	Let $\os_{1},\dots , \os_{N}$ be a box representation of $\funf$. Since intersections of boxes are again boxes, we can assume without loss of generality that $\os_{i}$'s are pairwise disjoint (but their boundaries {can touch)}. {Write $O:=\bigcup_j\os_j$ so that $\supp{\funf}=\overline{O}$,
		and put $r = \sup \{ e_{1} \cdot x \ : \ x \in O \}$} for the {largest abscissa in the $e_{1}$-direction within $\overline{O}$.}
	Let $P$ be {the} hyper-plane that is perpendicular to $e_{1}$ {and crosses} the $e_{1}$-axis at the point $r$. {This
		plane} intersects {$\overline{O}$}
	{along one} face  of finitely many boxes
	{$\overline{\os}_{i_{1}},\dots ,\overline{\os}_{i_{M}} $,
		and  there exists a number $l>0$ so small that} the shifted plane $P_{2l} \coloneqq P - 2l e_{1} $ intersects {$O$}
	{in} the same family of boxes{:} $P_{2l}\cap
	{O} = P_{2l} \cap (\os_{i_{1}}\cup\dots \cup\os_{i_{M}} )$. 
	
	Now, consider {the} affine translation $T(x) \coloneqq x + l e_{1}$, and put  for brevity $K \coloneqq {O} \cup T^{-1}({O})$. For $a,b \in \RR$  with $b\geq a$,  we define a slice in {the} $e_1$-direction by $\sli{a,b} = \{ x \in \Rd  \ : \ e_{1}\cdot x \in (a,b] \}$. Then, the domain $K$ splits into three {slices:}
	\begin{align*}
		K = 
		\underbrace{K \cap \sli{-\infty,r-2l}}_{C}
		\cup
		\underbrace{ K \cap \sli{r-2l, r-l}}_{B}
		\cup 
		\underbrace{K \cap \sli{r-l, r}}_{A}.
	\end{align*}
	
	A {corresponding  picture} in 2D is {shown} in Figure \ref{fig:2d-construction} {to illustrate the construction}. The {translation} $T$ induces {on vectors}
	a unitary operator $\mathbb{T}$ %
	{which is simply the identity}.
\begin{figure}[t]
		\centering
		\def\svgwidth{10cm}
		\input{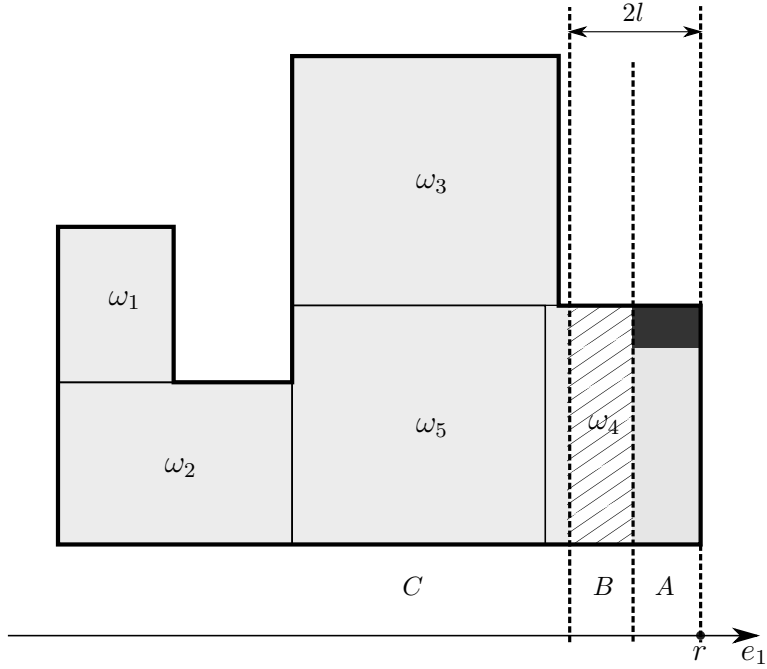}
		\caption{Constructed regions $A$, $B$, and $C$, for {$O$} being a union of five cubes $\omega_1,\dots,\omega_5$. The dashed lines indicate the right boundaries of the slices $\sli{-\infty, r-2l}$, $\sli{r-2l, r-l}$, and $\sli{r-l, r}$. The small dark box within $\os_{4}$ shows a possible support of the function $\funf_{d}$ that arises after subsequently slicing $\funf$ in the directions $e_{1}$ to $e_{d}$ (here, $d=2$).}
		\label{fig:2d-construction}
	\end{figure}
	The {corresponding} shifted vector field {is
		therefore} $\funf_{T} = \mathbb{T}^{\star} \funf \circ T = \funf \circ T$. From  Remark \ref{rem:unitary-trafo}  {it follows} that $\funf$ and $\funf_{T}$ are both invisible on $K^{\outside}$. {Thus,} we have
	{that}
	\begin{align}
		0	
		\!= 	\p_{K^{\outside}\ot K} \left(\funf - \funf_{T} \right)
		\!= 	\p_{K^{\outside}\ot C}\left( \funf - \funf_{T} \right)
		+ \p_{K^{\outside}\ot B}\left( \funf - \funf_{T} \right)
		+ \p_{K^{\outside}\ot A}\left( \funf - \funf_{T} \right).
		\label{eq:split-of-potential}
	\end{align}
	Consider the last term on the rhs. of \eqref{eq:split-of-potential}:
	for $x\in A$, we have $Tx \notin {O}$ {by the maximality of $r$, and consequently}
	\begin{align*}
		\funf_{T}(x)=\funf (Tx) = \sum_{i=1}^{N} \chi_{{\omega_{i}}}(Tx) \ \vect_{i} = 0.
	\end{align*}
	
	Next, consider the middle term on the rhs. of \eqref{eq:split-of-potential}: for $x\in B$ {we have $Tx\in A$,} and since by construction both $B$ and $A$ intersect the same boxes in {$O$}, we have
	\begin{align*}
		\funf_{T}(x) = \funf (Tx) = \sum_{i=1}^{N} \chi_{{\omega_{i}}}(Tx) \ \vect_{i} 
		=  \sum_{i=1}^{N} \chi_{{\omega_{i}}}(x) \ \vect_{i} 
		= \funf (x).
	\end{align*}
	Therefore, this {middle} term vanishes and {equation \eqref{eq:split-of-potential} can be rewritten as}
	\begin{align}
		0	=  
		\p_{K^{\outside}\ot C}\left( \funf - \funf_{T} \right)
		+ \p_{K^{\outside}\ot A}\left( \funf \right).
		\label{eq:split-of-potential-simplified}
	\end{align}
	The first term in \eqref{eq:split-of-potential-simplified} defines a harmonic function on $K^{\outside}$ that {extends on the larger region $C^{\outside}$ to a harmonic function $h_1$. Likewise, the second term
		extends  harmonically  to $h_2$ defined on $A^{\outside}$.
		The domain $D:=\{x:\,r-2l<x_1<r-l\}$ is included in $A^{\outside}\cap C^{\outside}$, therefore $h_1+h_2$ is harmonic there. For $R>0$ large enough
		$D\setminus \mathbb{B}_R(0)\subset K^\outside$, and since $h_1+h_2=0$ in $K^\outside$
		by \eqref{eq:split-of-potential-simplified} we deduce that
		$h_1+h_2$ is identically zero in $D$. Hence, the function $h$ which is
		$h_1$ on $\{x;\,x_1>r-2l\}$ and $-h_2$ on $\{x:\,x_1<r-l\}$ is well-defined and harmonic,} 
	everywhere on $\Rd${, moreover} it vanishes at infinity. By Liouville's theorem we thus have $h=0${, in particular
		$h_2=0$ in $\{x:\,x_1<r-l\}$ and consequently}
	\begin{align*}
		\p_{A^{\outside}\ot A}(\funf) = 0,
	\end{align*}
	{since $\overline{A}$ consists of finitely many disjoint
		closed polytops $P_j$, 
		and therefore  $A^{\outside}$ is connected.}
	Hence, the restriction of $\funf$ to the slice $A$ is invisible{, and by Corollary \ref{cor:cluster} so is the restriction of $\funf$ to
		the interior $O_j$ of $P_j$ for each $j$. Note that all the vertices of $O_j$ in the plane $\{x:x_1=r\}$ are vertices of $\supp{\funf}$.}
	{We can now define} $\funf_{1} = \funf\vert_{O_1}$ and repeat the
	{previous} construction with $\funf_{1}$ instead of $\funf$, only with $e_2$ in place of $e_1$. This {yields that the restriction of $\funf_{1}$ to a small slice $O_2$ of $O_1$ in the direction $e_{2}$ is invisible} (compare to the dark region in Figure \ref{fig:2d-construction}). {Note that every vertex of $O_2$ with maximal coordinates $x_1$, $x_2$ is a vertex of $\supp{\funf}$.  We then define $\funf_2:={\funf_1}|_{O_2}$
		and repeat the construction for} $\funf_{2}$ in the direction $e_{3}$. Continuing this process{,} 
	we eventually get a field $\funf_{d}$ { which is invisible, by construction}. {Moreover}, $\funf_{d}$ is {the} restriction of the original field $\funf$ {to} a small box with edge {lengths} $l_{1},\dots,l_{d}$ that shares a vertex with one of the outer corners of {$O$, therefore}  this box is contained entirely in one of the boxes  $\os_{1},\dots, \os_{N}$. Consequently, $\funf_{d}$ is a non-vanishing constant vector field, which is a harmonic gradient, defined on a Lipschitz domain. Theorem \ref{thm:null_spaces}, however, states that such a vector field cannot be invisible. This yields a contradiction and proves the first assertion. 
	
	For the second assertion observe that if $\ol$ contains $\supp{\funf}$ then $\supp{\funf}^{\outside}$ contains $\ol^{\outside}$. From the first part of the theorem we know that $\funf$ is visible on $\supp{\funf}^{\outside}$ and thus it is in particular visible on $\ol^{\outside}$. From Theorem \ref{thm:null_spaces} it follows that the Helmholtz-Hodge-decomposition of $\funf$, extended by zero to $\ol$, contains a {nonzero} harmonic gradient.
\end{proof}

Theorem \ref{thm:simple-funtions} can be generalized to simple fields with more general representations than rectangular parallelepipedic boxes, but we shall not pursue this here. {An interesting feature of box-simple fields is that they form a vector space, because rectangular parallelepipedic boxes
  are stable under intersections. Hence, Theorem \ref{thm:simple-funtions} implies
  the following uniqueness result.}
\begin{cor}\label{cor:uniquerect}
	Let $\funf$ and $\fung$ be box-simple vector fields with supports contained in some open set $\Omega\subset\Rd$ and
	\begin{align*}
		\p_{\cheese^{\outside}\ot\cheese}(\mathbf{f})=\p_{\cheese^{\outside}\ot\cheese}(\mathbf{g}).
	\end{align*}
	Then it holds {that} $\mathbf{f}=\mathbf{g}$.
\end{cor}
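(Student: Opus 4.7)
The strategy is to reduce the statement to Theorem \ref{thm:simple-funtions} applied to the difference $\mathbf{h}:=\mathbf{f}-\mathbf{g}$. Two preliminary facts need to be checked before that reduction: first, that $\mathbf{h}$ is again box-simple, and second, that the hypothesis $\p_{\Omega^{\outside}\ot\Omega}(\mathbf{h})=0$ on $\Omega^{\outside}$ extends to the vanishing of $\p(\mathbf{h})$ on the whole of $\supp{\mathbf{h}}^{\outside}$, which is what Theorem \ref{thm:simple-funtions} actually requires.

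For the first point, I would observe that the intersection of two axis-aligned rectangular parallelepipeds is again such (possibly empty). Hence, starting from box representations of $\mathbf{f}$ and $\mathbf{g}$, pairwise intersections provide a common refinement on which both $\mathbf{f}$ and $\mathbf{g}$ are constant, so $\mathbf{h}$ is constant on each cell of this refinement; thus $\mathbf{h}$ is box-simple. In particular the box-simple fields form a vector space, $\supp{\mathbf{h}}\subset\Omega$ is compact, and by linearity of the magnetic potential the hypothesis reads $\p_{\Omega^{\outside}\ot\Omega}(\mathbf{h})=0$.

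For the second point, $\p(\mathbf{h})$ is harmonic on the open set $\mathbb{R}^{d}\setminus\supp{\mathbf{h}}$ since $\mathbf{h}$ has compact support. It vanishes on the open subset $\Omega^{\outside}\subset \mathbb{R}^{d}\setminus\supp{\mathbf{h}}$, hence by the identity principle for harmonic (real-analytic) functions it vanishes on every connected component of $\mathbb{R}^{d}\setminus\supp{\mathbf{h}}$ that meets $\Omega^{\outside}$. In the generic case, in particular whenever $\Omega$ is bounded, $\Omega^{\outside}$ contains a neighborhood of infinity and therefore meets the unique unbounded component of $\mathbb{R}^{d}\setminus\supp{\mathbf{h}}$, on which $\p(\mathbf{h})$ then vanishes identically.

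Applying the contrapositive of Theorem \ref{thm:simple-funtions} to $\mathbf{h}$ forces $\mathbf{h}=0$, i.e.\ $\mathbf{f}=\mathbf{g}$. The only delicate point is the harmonic continuation step: if $\supp{\mathbf{h}}$ has interior cavities (bounded components of its complement) that $\Omega^{\outside}$ does not reach, one obtains vanishing of $\p(\mathbf{h})$ only on the unbounded component of $\mathbb{R}^{d}\setminus\supp{\mathbf{h}}$. This is nevertheless enough to run the argument of Theorem \ref{thm:simple-funtions}, because the key Liouville step there relies exclusively on harmonic extensions that are controlled at infinity, so vanishing on the unbounded component alone is sufficient to reach a contradiction.
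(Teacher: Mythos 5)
Your argument is correct and follows the same route as the paper: the paper's proof likewise notes that box-simple fields form a vector space (axis-aligned boxes being stable under intersection) and applies Theorem \ref{thm:simple-funtions} to $\funf-\fung$. The additional step you supply --- propagating the vanishing of the potential from $\Omega^{\outside}$ to the unbounded component of $\supp{\funf-\fung}^{\outside}$ by real-analyticity, and observing that the Liouville step in the theorem's proof only uses invisibility near infinity --- is left implicit in the paper's two-line proof, and it is a legitimate gap to address explicitly since $\supp{\funf-\fung}^{\outside}$ may have bounded components that $\Omega^{\outside}$ does not reach.
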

\begin{proof}
	The space of box-simple vector fields is a vector space. Therefore, $\funf-\fung$ is box-simple and by assumption $\p_{\cheese^{\outside}\ot\cheese}(\funf-\fung)=0$. By Theorem \ref{thm:simple-funtions}{, it follows} that $\mathbf{f}=\mathbf{g}$.
\end{proof}

\subsection{Stability estimate for box-simple vector fields}
\label{subsec:sb}
As {mentioned already} in the previous section, 
{$\p_{\ol^{\outside}\ot\os}:L^2(\omega;\mathbb{R}^d)\to\sob{\ol^c}$ is a compact operator when $\overline{\os}\subset\ol$}. 
It is clear that box-simple vector fields are dense in $L^{2}(\omega,\Rd)${,}
and thus no stability result can hold for such fields, just as it is the case  for harmonic gradients supported on a smaller Lipschitz domain. {Unlike harmonic gradients}, however, box-simple vector fields {naturally give rise to a scale of finite dimensional vector spaces, for which stability estimates trivially hold. As} opposed to the case of harmonic gradients, {such estimates hold} for vector fields whose support is not all of $\Omega$, and thus {do} not require harmonic continuation. 

\paragraph{Box-simple lattice fields.}
Let $\Lambda_{\delta}$ define a cubic lattice on $\Rd$ with lattice spacing $\delta>0$.
A \textit{box-simple $\Lambda_{\delta}$ field} is a simple vector field that admits a representation whose elements are the cells of $\Lambda_{\delta}$. For an open domain $\ol$ we denote the space of box-simple $\Lambda_{\delta}$ fields with support in $\ol$ by $\RR_{\Lambda_{\delta},\ol}$. Clearly, if $\ol$ is bounded then $\RR_{\Lambda_{\delta},\ol}$ is finite dimensional. 

{Discretization via box-simple lattice fields {is} frequently used in numerical applications{, where they typically combine with  additional} means of regularization depending on the particular goal {one is pursuing} (e.g., sparsity or smoothness). That discretization alone is{, in spite of Proposition \ref{prop:stab},} unlikely to be sufficient {for} adequate regularization of inverse magnetization problems is {illustrated by example \ref{expl:cdelta} below, which indicates} that the bounding constant grows exponentially with the refinement of the discretization scale.}

\begin{prop}\label{prop:stab}
	Let $\ol$ be a bounded, connected and simply connected Lipschitz domain in $\Rd$ ($d\geq3$). There is a constant $C(\delta)>0$ such that, for every $\funf\in\RR_{\Lambda_{\delta},\ol}$,
	\begin{align}
		\| \funf \|_{\ltd \ol} \leq C(\delta)  \ 
		\left\| \tr_{\bol}\left( \p_{{\ol^c}\ot\ol \vpsum} \left( \funf \right) \right)\right\|_{\lt {\bol}}.
	\end{align}
	The constant $C(\delta)$ becomes unbounded as $\delta$ approaches zero.
\end{prop}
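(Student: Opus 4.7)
The plan is to establish the existence of $C(\delta)$ for each fixed $\delta>0$ by exploiting finite dimensionality, and then rule out any uniform bound as $\delta\to 0$ by combining the density of box-simple lattice fields with the existence of nonzero invisible fields.

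For fixed $\delta>0$, since $\ol$ is bounded, only finitely many cells of $\Lambda_{\delta}$ have closure contained in $\ol$, hence $\RR_{\Lambda_{\delta},\ol}$ is finite-dimensional. I would consider the linear map $T_{\delta}\colon\RR_{\Lambda_{\delta},\ol}\to\lt{\bol}$ defined by $T_{\delta}(\funf)=\tr_{\bol}(\p_{\ol^{\outside}\ot\ol}(\funf))$, and show it is injective: if $T_{\delta}(\funf)=0$, then $u:=\p_{\ol^{\outside}\ot\ol}(\funf)$ is harmonic on $\ol^{\outside}$, vanishes at infinity (since $|\nabla N(x-y)|=O(|x|^{1-d})$ uniformly in $y\in\overline{\ol}$), and has vanishing trace on $\bol$. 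Uniqueness of the Dirichlet problem on each connected component of $\ol^{\outside}$ (the maximum principle on any bounded component, and the exterior Dirichlet problem recalled after \eqref{BDLO} on the unbounded one) gives $u\equiv 0$, and Corollary \ref{cor:uniquerect} applied with $\fung=0$ then forces $\funf=0$. Since $T_{\delta}$ is an injective linear map on a finite-dimensional normed space, it is bounded below, yielding the required constant $C(\delta)>0$.

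To prove that $C(\delta)$ blows up as $\delta\to 0$, I would argue by contradiction. Suppose there were $C>0$ and a sequence $\delta_{n}\to 0$ with $C(\delta_{n})\leq C$. By Theorem \ref{thm:null_spaces}, the null-space of $\p_{\ol^{\outside}\ot\ol}$ contains nontrivial elements; pick any nonzero invisible $\funf_{0}\in\ltd\ol$ (for instance in $\sobdivo\ol$). Box-simple lattice fields are dense in $\ltd\ol$ as $\delta\to 0$: the cellwise average of $\funf_{0}$ over cells of $\Lambda_{\delta_{n}}$ whose closure lies in $\ol$, extended by zero, produces $\funf_{n}\in\RR_{\Lambda_{\delta_{n}},\ol}$ with $\funf_{n}\to\funf_{0}$ in $\ltd\ol$.

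The final step is to pass to the limit in $\|\funf_{n}\|_{\ltd\ol}\leq C\|T_{\delta_{n}}(\funf_{n})\|_{\lt{\bol}}$. The left-hand side converges to $\|\funf_{0}\|_{\ltd\ol}>0$. For the right-hand side, one needs the linear map $\funf\mapsto\tr_{\bol}(\p_{\ol^{\outside}\ot\ol}(\funf))$ to be continuous from $\ltd\ol$ into $\lt{\bol}$; this follows from the $\sobloc{1}{\Rd}$-regularity of the magnetic potential of an $L^{2}$-field noted in the paper, which yields an $H^{1}$-bound on a bounded neighborhood of $\overline{\ol}$ and hence a continuous trace into $H^{1/2}(\bol)\subset\lt{\bol}$. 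Consequently $T_{\delta_{n}}(\funf_{n})\to\tr_{\bol}(\p_{\ol^{\outside}\ot\ol}(\funf_{0}))=0$, forcing $\|\funf_{0}\|_{\ltd\ol}=0$, a contradiction. The main technical obstacle is making the $\ltd\ol\to\lt{\bol}$ continuity of the boundary evaluation precise; this is routine given the regularity statement for volume potentials recorded earlier in the paper, but it is the one ingredient that does not come entirely for free from the finite-dimensional argument.
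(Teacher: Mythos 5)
Your proposal is correct and follows essentially the same route as the paper's own proof: injectivity of the boundary-evaluation operator on the finite-dimensional space $\RR_{\Lambda_{\delta},\ol}$ (via Theorem \ref{thm:simple-funtions} together with uniqueness for the Dirichlet problem) gives $C(\delta)$, and the blow-up as $\delta\to0$ comes from approximating a nonzero invisible field by lattice fields. Your treatment is in fact slightly more careful than the paper's, since you make explicit the $\ltd{\ol}\to\lt{\bol}$ continuity of the boundary evaluation needed to pass to the limit, a point the paper leaves implicit.
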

\begin{proof}
	By Theorem \ref{thm:simple-funtions} the operator $\p_{\ol^{\outside}\ot\ol}$ 
	is injective on the space of box-simple vector fields. By uniqueness of
	{a solution to the harmonic Dirichlet problem in $\sobloc{1}{\Omega}$
		with data in $H^{1/2}(\partial\Omega)$ (see discussion in the proof of
		Theorem \ref{thm:null_spaces}),} it thus follows that $\p_{\bol\ot\ol}$ is injective as well (here, $\p_{\bol\ot\ol}$ denotes the trace of $\p_{\ol^{\outside}\ot\ol}$ {on} $\partial\Omega$);  restricting $\p_{\bol\ot\ol}$ to $\RR_{\Lambda_{\delta},\ol}$ defines a bijective linear operator on a finite dimensional space. This operator is thus invertible{,} and it follows {when $\funf\in \RR_{\Lambda_{\delta},\ol}$ that}
	\begin{align}
		\| \funf \|_{\ltd \ol} = 
		\left\|	\p_{\bol\ot\ol}^{-1} \left(	\p_{\bol\ot\ol\vpsum}	\left(	\funf	\right)	\right)	\right\|_{\ltd \bol}	
		\leq C \  \left\| \p_{\bol\ot\ol\vpsum} \left(\funf\right)\right\|_{\ltd \bol},
		\label{eq:simple-estimate}
	\end{align}
	for some constant $C>0$.
	{As $\bigcup_{\delta>0}\RR_{\Lambda_{\delta},\ol}$ is dense in $\ltd \ol$,  we can construct} a sequence of fields $\funf_{n}\in\RR_{\Lambda_{1/n},\ol}$ that converges to an element {of} $\nabla\sobo{\ol}$, which is in the nullspace of $\p_{\bol\ot\ol\vpsum}$. Thus{,} the magnetic potential of $\funf_n$ converges to zero, but the left hand side of \eqref{eq:simple-estimate} remains finite, implying the constant $C(\delta)$ has to grow indefinitely {when $\delta\to0$}. 
\end{proof}

\begin{expl}
	\label{expl:cdelta}
	\begin{figure}
		\centering
		\includegraphics[width=0.75\textwidth]{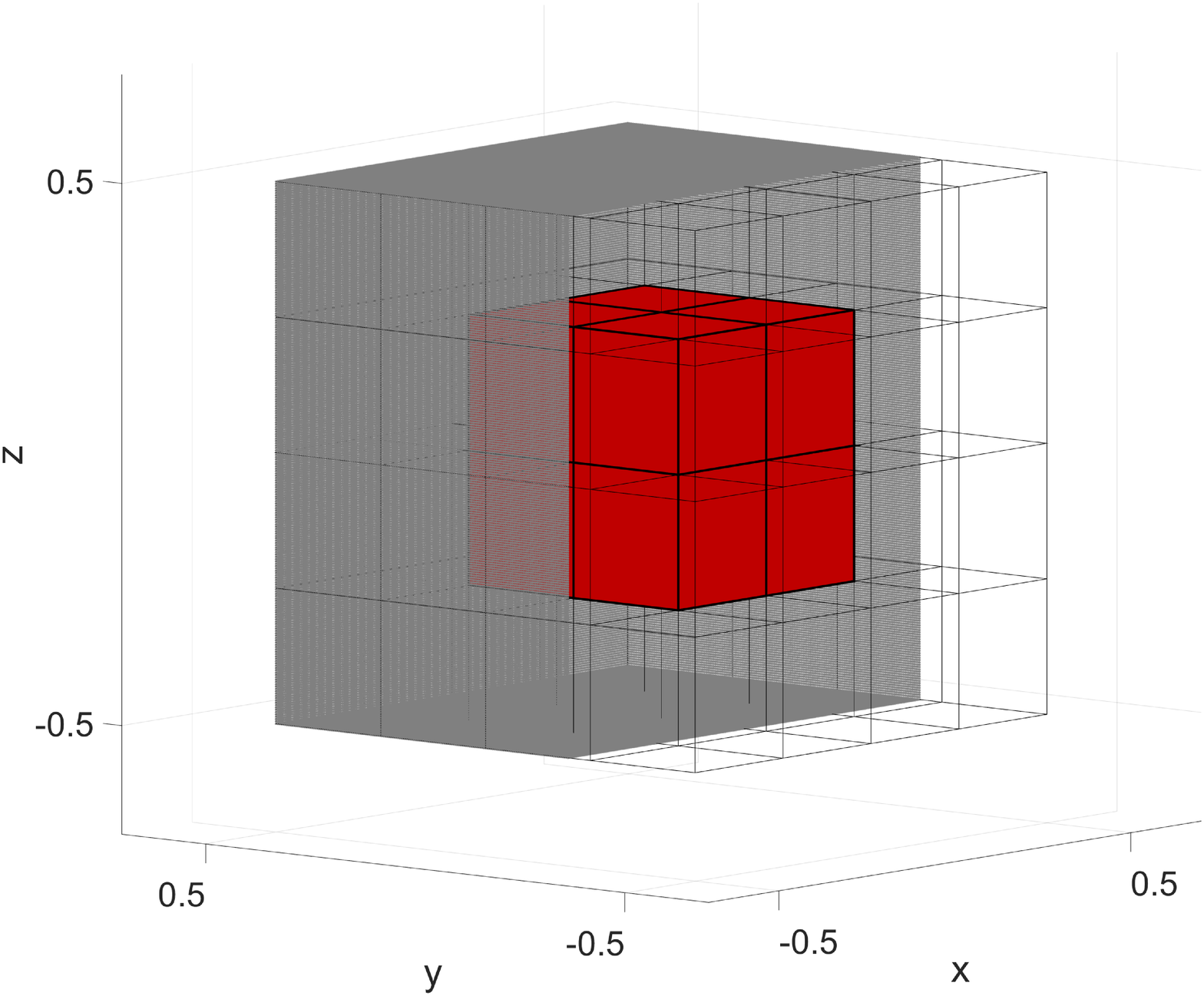}
		\caption{Configuration for the numerical setup in Example \ref{expl:cdelta}: transparent boxes indicate the domain $\Omega=[-\frac{1}{2},\frac{1}{2}]^3$, discretized into a lattice with $\delta = \frac{1}{4}$ (i.e., $N_\delta = 64$ boxes); surrounded by evaluation locations $x'_i$ on $\partial\Omega$, indicated by black dots (all locations with with coordinate value $y<-0.25$ are removed for visualization). The red boxes indicate the domain $[-\frac{1}{4},\frac{1}{4}]^3$, where the function $\funf_a$ with $a=\frac{1}{4}$ is supported.\\}
		\label{fig:numExplLattice}
		\centering
		\includegraphics[trim = 20mm 5mm 20mm 10mm, clip,width=0.7\textwidth]{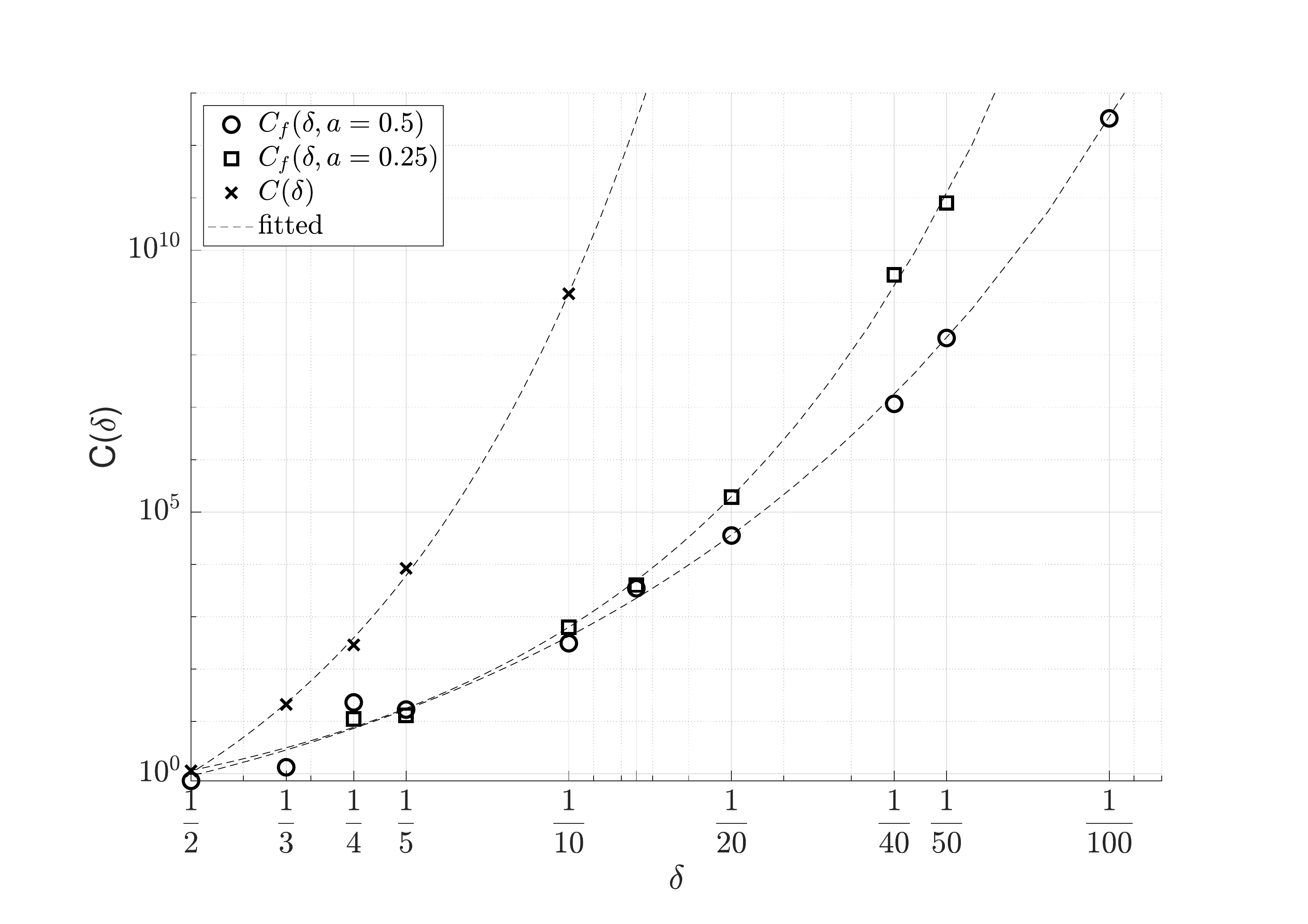}
		\caption{Bounds for the evolution of $C(\delta)$ for varying $\delta$: crosses indicate values obtained by estimation of the operator norm of discretized versions of $\p_{\bol\ot\ol}^{-1}$; circles and cubes indicate bounds obtained by numerical evaluation of the ratio $C_f(\delta)=\|\funf_{\delta}\|_{L^2(\ol,\RR^3)}/\|\p_{\bol\ot\ol}(\funf_{\delta})\|_{L^2(\bol)}$ for $\funf_{\delta}$ as described in Example \ref{expl:cdelta}. Note that $\funf_\delta$ denotes the discretization of the function $\funf=\funf_a$ for two different parameters $a=\frac{1}{2}$ and $a=\frac{1}{4}$, respectively; dashed lines indicate fitted curves of type $\exp(\gamma+\beta\delta^{-\alpha})$, with parameters $\alpha,\beta,\gamma$ as indicated in table \ref{tab:par}.}
		\label{fig:DdeltaDev}
	\end{figure}
	Due to the {known exponential} ill-posedness of inverse potential field problems, it can be expected that the constant $C(\delta)$ from Proposition \ref{prop:stab} grows exponentially with $\delta^{-\alpha}$, for some  fixed $\alpha>0$, as the discretization parameter $\delta$ tends to zero. This will be illustrated numerically in the following example. We use two approaches:
	\begin{itemize}
		\item [(a)]  We choose a function $\funf\in \nabla \sobo \cheese$, for which we know by Theorem \ref{thm:null_spaces} that $\p_{ \cheese^{\outside}\ot\cheese\vpsum}(\funf)=0$, and a box-simple lattice field discretization $\funf_\delta$ of $\funf$, for which we know from Theorem \ref{thm:simple-funtions} that it is not in the nullspace. The ratio $C_f(\delta)=\|\funf_\delta\|_{L^2(\ol,\RR^3)}/\|\p_{\bol\ot\ol}(\funf_\delta)\|_{L^2(\bol)}$ must then be a lower bound of $C(\delta)$, and by construction we know that this ratio must tend to infinity as $\delta$ tends to zero.
		\item[(b)] We discretize the operator $\p_{ \cheese^{\outside}\ot\cheese\vpsum}$ according the used cubic lattice $\Lambda_{\delta}$ and compute the operator norm of its inverse, which should be an appropriate estimate of $C(\delta)$ and clearly be larger than the lower bound from (a). 
	\end{itemize}
\begin{table}
	\caption{Fitting parameters for Figure \ref{fig:DdeltaDev}, with fitted curves of type $\exp(\gamma+\beta\delta^{-\alpha})$.}
	\label{tab:fitting}
	\centering
	\begin{tabular}{c|c|c|c}
		& $\gamma$ & $\beta$ & $\alpha$\\\hline
		$C(\delta)$ & -7.933 & 4.562 & 0.8044 \\
		$C_f(\delta)$, $a=0.5$ & -5.493 & 3.893 & 0.4717 \\
		$C_f(\delta)$, $a=0.25$ & -2.987 & 1.944 & 0.6859
	\end{tabular}\label{tab:par}
\end{table}
	The setup for both approaches is identical: we choose $\cheese=[-\frac{1}{2},\frac{1}{2}]^3$ and subdivide it into a cubic lattice $\Lambda_{\delta,\Omega}$ for $\delta=\frac{1}{2}, \frac{1}{3},\frac{1}{4},\ldots$, with an overall number of $N_\delta=\delta^{-3}$ boxes $\omega_1,\ldots,\omega_{N_{\delta}}$. For a continuous function $\funf$, we define a discretization via $\funf_\delta(x)=\funf(x_{c,i})$, where $x_{c,i}$ is the center of the box $\omega_i$ that contains $x$. The vector $\mathbf{v}_\delta=(\funf(x_{c,1}),\ldots,\funf(x_{c,N_\delta}))\in \RR^{3N_\delta}$ then completely characterizes the function $\funf_\delta\in\RR_{\Lambda_{\delta},\ol}$. The evaluation of $\p_{\cheese^{\outside}\ot\cheese\vpsum}(\funf_\delta)(x'_j)$ at predefined locations $x'_1,\ldots, x'_M\in\Omega^c$ is identical to the matrix vector product $\mathbf{P}_\delta\mathbf{v}_\delta$ with
	\begin{align*}
		\mathbf{P}_\delta=\begin{pmatrix}
			\left(\p_{\cheese^{\outside}\ot\cheese\vpsum}(\chi_{\omega_i}\mathbf{e}_1)(x'_j), \p_{\cheese^{\outside}\ot\cheese\vpsum}(\chi_{\omega_i}\mathbf{e}_2)(x'_j), \p_{\cheese^{\outside}\ot\cheese\vpsum}(\chi_{\omega_i}\mathbf{e}_3)(x'_j)\right)
		\end{pmatrix}_{i=1,\ldots,N_\delta\atop j=1,\ldots,M}\in\RR^{M\times 3N_\delta}
	\end{align*} 
	where $\mathbf{e}_1,\mathbf{e}_2,\mathbf{e}_3$ denote the Cartesian standard basis vectors. The matrix entries can be computed based on the explicit formulas provided in \cite{nagy2009PrismPotential}. An exemplary setup for $N_\delta=64$ and $M\approx330,000$ points on the surface $\partial\Omega$ of the cube $\Omega$ is illustrated in figure \ref{fig:numExplLattice}. 
	
	The exemplary function $\funf\in \nabla \sobo \cheese$ is chosen to be $\funf=\funf_a=\nabla f_a$ with
	\begin{align*} 				f_a(x)=\left\{\begin{array}{ll}\Pi_{k=1}^3\exp\left(-(a^2-(x\cdot\mathbf{e}_k)^2)^{-1}\right), &\max_{k=1,2,3}|x\cdot\mathbf{e}_k|<a,\\0,&\textnormal{else},\end{array}\right.
	\end{align*}
	for two parameters $a=\frac{1}{2}$ and $a=\frac{1}{4}$, respectively. We want to stress that the support of $\funf_a$ is all of $\Omega$ if $a=\frac{1}{2}$, while it is a proper subset of $\Omega$ if $a=\frac{1}{4}$ (indicated by the red cubes in Figure \ref{fig:numExplLattice}). The ratio  $C_f(\delta)=\|\funf_\delta\|_{L^2(\ol,\RR^3)}/\|\p_{\bol\ot\ol}(\funf_\delta)\|_{L^2(\bol)}$ for the discretization can be computed via $\frac{|\partial\Omega|}{M\delta^{d}}\frac{\|\mathbf{v}_\delta\|_{\ell^2}}{\|\mathbf{P}_\delta\mathbf{v}_\delta\|_{\ell^2}}$. The operator norm of the inverse of the discretized version of $\p_{ \cheese^{\outside}\ot\cheese\vpsum}$, which yields $C(\delta)$, is computed via the spectral norm $\delta^d\|(\mathbf{P}_\delta^T\mathbf{P}_\delta)^{-1}\|_2$. In order to compute the inverse matrices $(\mathbf{P}_\delta^T\mathbf{P}_\delta)^{-1}$ to high precision, we used the algorithm from \cite{rump2009IllCondInverse}. 
	
	The outcome of this setup is illustrated in Figure \ref{fig:DdeltaDev} and clearly illustrates the expected exponential growth of $C(\delta)$. Additionally, we see that localization of the function $\funf=\funf_a$ within $\Omega$ can play a role for the size of the bound $C_f(\delta)$: the bound is larger for the choice $a=\frac{1}{4}$, which corresponds to a support of $\funf$ that is a proper subset of $\Omega$. For the case $a=\frac{1}{2}$, corresponding to a support of $\funf$ that covers the full domain $\Omega$, the bound is smaller but still grows exponentially. All computations have been done using Matlab.
\end{expl}

\section{Conclusion}
In this paper, we characterized {non-uniqueness for a fairly general} magnetic inverse problem. We have shown that the {space} of invisible magnetizations {is $\sobo \ol \oplus \sobdivo \ol$ possibly augmented by the subspace of harmonic gradients $\nabla\mathscr{I}_{\olt}$,}  depending on the topology of the problem  (Theorem \ref{thm:null_spaces}).
In applications, this implies {for example} that the inverse problem {has a unique solution} if we can a priori expect the magnetization to be a harmonic gradient. This is for example the case  when the magnetization of the object is induced by an ambient magnetic field, and the susceptibility of the object is constant. But more general {situations are of course} possible, {in which this assumption is not appropriate}. Then{, a harmonic gradient may} not accurately reflect the true magnetization. 

Assuming {instead that the latter is} piecewise constant seems
more appropriate for {many}  applications. The space of piecewise constant vector fields, however, contains invisible vector fields (Examples \ref{exp:triangle} and \ref{expl_ball}). Therefore{,} restricting the magnetization to this space does not make the {solution to the inverse problem unique}. As we {showed}, however, a subspace of piecewise constant vector fields---the space of box-simple fields---yields
{the desired uniqueness property} (Theorem \ref{thm:simple-funtions}). 

The characterization of box-simple vector fields provides us with a different description {of the non-uniqueness phenomenon}. In the case of harmonic gradients, the space that guarantees uniqueness is closed but has a nontrivial orthogonal complement (comprising those spaces of invisible magnetizations mentioned above). In the case of  box-simple vector fields,
{uniqueness does not imply much for the general problem,
	for such fields form
	a space which is dense but not closed in $L^{2}$}.
Therefore every magnetization can be approximated by such a field,
{and non-uniqueness} for the unconstrained inverse problem arises precisely because several box-simple fields can approximate the same $L^{2}$-field. Nevertheless, if a priori considerations assure us that the true magnetization in question is box-simple, our result proves that the reconstruction will be unique.

\paragraph{Acknowledgments.} AK and CG have been partially funded by BMWi (Bundesministerium f\"ur Wirtschaft und Energie) within the joint project 'SYSEXPL -- Systematische Exploration', grant ref. 03EE4002B.

\printbibliography

\end{document}